\documentclass[12pt,twoside]{amsart}
\usepackage[top=2.5cm, bottom=2.5cm, left=3cm, right=2.8cm]{geometry}
\usepackage{graphicx}
\usepackage{xcolor}
\usepackage{fancyhdr}
\usepackage[colorlinks=true, linkcolor=blue!60!black, citecolor=blue!60!black, urlcolor=blue!60!black]{hyperref}
\usepackage{ulem}

\newtheorem{theorem}{Theorem}[section]
\newtheorem{proposition}[theorem]{Proposition}
\newtheorem{lemma}[theorem]{Lemma}
\newtheorem{corollary}[theorem]{Corollary}

\theoremstyle{definition}
\newtheorem{definition}[theorem]{Definition}

\theoremstyle{remark}
\newtheorem{remark}[theorem]{Remark}

\theoremstyle{plain}
\newtheorem{maintheorem}{Theorem}

\begin{document}

\title
{On rigidity of Finsler manifolds without conjugate points and with constant $S$-curvature}

\author{Anthony J. García, José Barbosa Gomes, and Rafael O. Ruggiero}

\keywords{Finsler manifolds, conjugate points, rigidity,
S-curvature, Anosov flows}

\begin{abstract}
We study rigidity phenomena in closed Finsler manifolds without conjugate points under assumptions on the $S$-curvature. We prove that a closed $C^\omega$ Finsler manifold with constant $S$-curvature, continuous Green bundles, and admitting a hyperbolic closed geodesic must be Riemannian. In the $C^\infty$ setting, the same conclusion holds under the additional assumption that the geodesic flow is transitive. As a consequence, we obtain rigidity results for Finsler manifolds with uniform visibility universal covering. Our approach is based on the analysis of the Cartan vector field as a Jacobi field and its interaction with the geometry of Green bundles.
\end{abstract}

\maketitle 

\section{Introduction}

Finsler geometry is an important notion in classical mechanics, since the Hamiltonian flow in high energy levels of a Tonelli Hamiltonian can be modeled (up to reparametrization) by the geodesic flow of a Finsler metric. This fact is a well know generalization of the Maupertuis principle for Mechanical Lagrangians, and it is our main motivation to study Hamiltonian flows from a geometric perspective. A smooth manifold endowed with a Finsler metric is called a Finsler manifold. Riemannian manifolds are, of course, Finsler manifolds, but the Finsler world includes much more than Riemannian examples in view of our previous remark. 

One of the main problems in Riemann-Finsler geometry is the characterization of Finsler metrics that are Riemannian. Different notions of curvatures and tensors have been defined in Finsler theory to capture the non-Riemannian character of a Finsler manifold. Among them, the Cartan tensor plays a crucial role in the problem: it vanishes if and only if the Finsler metric is Riemannian. 

Special properties of the Cartan tensor under certain assumptions on the Finsler metric, the so-called k-basic or k-sectional assumption on the Finsler flag curvature, relate the Cartan tensor to the Jacobi fields of the Finsler Lagrangian. This establishes a relevant connection between the study of the Cartan tensor and the dynamics of its geodesic flow. Akbar-Zadeh, in a pioneering article \cite{AZ1988} takes advantage of this nice remark to show that hyperbolic dynamics in Finsler geometry is somehow related to rigidity features: compact Finsler manifolds of constant negative flag curvature must be Riemannian. The proof of this result strongly uses the fact that the Cartan tensor in compact Finsler manifolds of constant negative flag curvature satisfies the Jacobi equation. See \cite{BCS}.

Subsequent work by G. Paternain \cite{P1997}, Gomes-Ruggiero \cite{GR2013}, Foulon-Ruggiero \cite{FR2016}, extended Akbar-Zadeh's result to Finsler metrics in compact surfaces of genus greater than one under the k-basic assumption (meaning that the flag curvature does not depend on the flag). G. Paternain assumes in addition that the Finsler metric is analytic; in the other above mentioned references, the Finsler metric is assumed to have no conjugate points. All these results represent remarkable extensions of Akbar-Zadeh in the sense that the role of the hyperbolicity of the geodesic flow, granted by the negative curvature in Akbar-Zadeh's work, is replaced by positive topological entropy of the geodesic flow.


The goal of this article is to investigate the role of the so-called S-curvature (see Subsection \ref{S-curvature}) in Finsler manifolds as one of the quantifiers of the non-Riemannian character of a Finsler metric. The S-curvature was introduced by Z. Shen in \cite{Shen1997}, it measures the rate of change of the Finsler volume form with respect to the Busemann-Hausdorff volume form along geodesics. In particular, it vanishes in Riemannian metrics, as well as in Berwald manifolds
(\cite[Section 2]{Shen1997} or \cite{Shen2016}). Since there are many Berwald metrics that are non-Riemannian metrics, the vanishing of the S-curvature is not enough to show that a Finsler metric is Riemannian (See, for instance, \cite[Section 11.6]{BCS} or \cite[Section 2.3]{CS2005}). However, inspired by the work of Shen, we shall focus on the study of Finsler manifolds without conjugate points where the S-curvature has special properties. Let us explain in more detail. 

We say that an n-dimensional Finsler manifold $(M,F)$ has \textit{constant S-curvature} if $S=(n+1)cF$, for some constant $c$. There exist examples of Finsler metrics that are not Riemannian and have constant $S$-curvature (see \cite[Example 5.2.1]{CS2005}).

Shen obtained rigidity results for manifolds with constant $S$-curvature and negative flag curvature \cite{Shen2005}. More precisely:

\begin{theorem}[Shen, \cite{Shen2005}]
Let $(M, F)$ be a $C^\infty$ closed Finsler manifold with constant  $S$-curvature. If $F$ has negative flag curvature, $K < 0$, then it must be Riemannian.
\end{theorem}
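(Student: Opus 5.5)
The plan follows the Akbar-Zadeh strategy described in the introduction: show that the Cartan vector field (the mean Cartan torsion $\mathbf I$) evaluated along geodesics is a Jacobi field. Negative curvature then forces any Jacobi field that is bounded for all time to vanish. Once $\mathbf I\equiv 0$, Deicke's theorem gives that $F$ is Riemannian.

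\textbf{Step 1 (derivatives of $\mathbf I$).} Recall Shen's identities relating the $S$-curvature to the mean Landsberg curvature $\mathbf J$ and to the Riemann curvature. The first is $\mathbf J = \dot{\mathbf I}$, the covariant derivative of $\mathbf I$ along the geodesic flow. The second is the formula for $\dot{\mathbf J}$: its trace-type term is expressed through $\ddot S$, the second derivative of $S$ along geodesics, together with vertical derivatives of $S$ and of $R$. Concretely, I would use the identity
\[
\ddot{\mathbf I}(u) + \mathbf I(R_y(u)) = -\tfrac13 \bigl(2\,\mathrm{Ric}_{y^k}u^k + \mathrm{Ric}_{y^k y^l}y^k u^l\bigr)\cdot(\text{correction}) + \text{terms involving } S_{\cdot|m}y^m,\ S_{\cdot y^k},
\]
where $\mathrm{Ric}_{y^k}$ denotes the vertical derivative $\partial\mathrm{Ric}/\partial y^k$. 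This is Shen's relation between $\mathbf I$, the $S$-curvature and the Ricci curvature.

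\textbf{Step 2 (using $S=(n+1)cF$).} Since $F$ is parallel along geodesics, $\dot S = 0$ and $\ddot S=0$. Moreover $S_{y^k}=(n+1)cF_{y^k}$, and its horizontal derivatives vanish. Substituting these into Shen's identity, the $S$-terms drop out, and the identity reduces to the Jacobi-type equation
\[
\ddot{\mathbf I} + \mathbf I\circ R_y = 0
\]
for the vector field $I$ dual to $\mathbf I$. In other words, $t\mapsto I(\dot\gamma(t))$ along each geodesic $\gamma$ is a Jacobi field, orthogonal to $\dot\gamma$ (since $\mathbf I(y)=0$). This is the step I expect to be the main obstacle. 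One has to track exactly which curvature terms survive. In particular, one must check that the constant-$S$ assumption kills the Ricci-derivative terms; this uses Shen's formula relating $\mathbf{E}$ (the mean Berwald curvature, which equals $\tfrac{n+1}{2}cF^{-1}h$ here, with $h$ the angular metric) to the Ricci derivatives. If some residual term $c^2$-type appears, I would absorb it by working with the modified curvature operator $R_y + c^2F^2\,\mathrm{Id}$, which is still negative definite after rescaling, or handle it as a constant-coefficient perturbation.

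\textbf{Step 3 (conclusion).} $M$ is compact, so $\mathbf I$ is a continuous function on the compact unit sphere bundle, and $|I(\dot\gamma(t))|$ is therefore bounded uniformly for $t\in\mathbb R$. With $K<0$, consider $f(t)=g_{\dot\gamma}(J,J)$ for $J(t)=I(\dot\gamma(t))$. It satisfies
\[
f''=2|J'|^2-2g(R(J),J)\ge 0,
\]
with strict convexity wherever $J\neq0$. A bounded convex function on $\mathbb R$ is constant, so $f''=0$, which forces $J\equiv0$. Hence $\mathbf I\equiv0$ on $SM$, and by Deicke's theorem $F$ is Riemannian.
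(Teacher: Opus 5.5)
Your argument is correct once Step 1 is read as citing Shen's Lemma 3.1, which the paper also uses (its proposition on constant $S$-curvature). Step 3, however, takes a different route from the one the paper outlines for this theorem. The paper invokes Foulon's theorem that a closed manifold with $K<0$ has Anosov geodesic flow. A perpendicular Jacobi field bounded on all of $\mathbb{R}$ must then lie in $E^s\cap E^u=\{0\}$, and Deicke's theorem finishes.

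You use convexity instead. Since $g_{\dot\gamma}(J,\dot\gamma)=0$, you get $f''=2|J'|^2-2K|J|^2\ge 0$. A bounded convex function on $\mathbb{R}$ is constant, so $K|J|^2\equiv 0$ and hence $J\equiv 0$. This is valid and more elementary. It works geodesic by geodesic and uses only two facts: the sign of $K$ on the flags spanned by $\dot\gamma$ and $J$, and the boundedness of $\vec{\mathbf I}$ on the compact unit sphere bundle. It needs neither Foulon's theorem nor any hyperbolicity.

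What the paper's viewpoint buys is generality. The only property it really uses is that a bounded perpendicular Jacobi field is both stable and unstable. That property survives without any curvature sign through the Green bundles (Lemma~\ref{stable-bounded}), which is exactly how Theorems A and B are proved. Your convexity argument has no counterpart in that setting.

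Step 1 needs tightening, though. The identity you display is not the right one: by homogeneity $\mathrm{Ric}_{y^ky^l}y^k=\mathrm{Ric}_{y^l}$, so your bracket collapses to $3\,\mathrm{Ric}_{y^k}u^k$. The detour through $\mathbf E$ and Ricci derivatives is also unnecessary. Write $\mathbf I_k=\tau_{\cdot k}$, $S=\tau_{|m}y^m$ and $\mathbf J=\dot{\mathbf I}$. Vertical and Berwald-horizontal derivatives commute on scalars, and $(\tau_{|k|m}-\tau_{|m|k})y^m=\mathbf I_lR^l_{\ k}$. Together these give the exact identity
\[
\ddot{\mathbf I}_k+\mathbf I_l R^l_{\ k}=S_{\cdot k|m}y^m-S_{|k}.
\]
When $S=(n+1)cF$ the right-hand side vanishes identically, because $F_{|k}=0$ and $F_{\cdot k|m}=0$. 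So no residual term ever arises.

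Your fallback would not have worked in any case. The operator $\mathbf R_y+c^2F^2\,\mathrm{Id}$ need not be negative definite, and rescaling cannot change its sign. The point is moot anyway, since on a closed manifold constant $S$-curvature forces $c=0$ (Mo's lemma, quoted in the paper).
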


In \cite[Lemma 4.1]{Mo2008}, it is shown that, for a compact Finsler manifold, constant $S$-curvature implies zero $S$-curvature (see also \cite[page 173]{Shen2016}).

The main idea of the proof of Shen's Theorem is the fact that, under the assumption on the S-curvature, the Cartan tensor satisfies the Jacobi equation of the Finsler Lagrangian along geodesics. Thus, the extension of Akbar-Zadeh's argument to Finsler manifolds with constant negative curvature is possible: the hyperbolicity of the dynamics implies that the Cartan tensor must vanish. Since the Cartan vector field is a bounded Jacobi field along the geodesics, it must vanish on hyperbolic sets; in particular, when the geodesic flow is Anosov (see Definition~\ref{hyperbolicity}), the manifold is Riemannian.

Closed Finsler manifolds $(M,F)$ with negative flag curvature have Anosov geodesic flow \cite{Foulon1992} and have no conjugate points. A natural question is then: what happens in Finsler manifolds without conjugate points? Is it possible to extend Shen's Theorem to this setting? 

A key tool in the study of the dynamics of geodesic flows in manifolds without conjugate points is the existence of two Lagrangian bundles over the unit tangent bundle that are invariant under the action of the differential of the geodesic flow, the so called Green bundles (see definition in Section \ref{preliminaries}). They were constructed for geodesics without conjugate points of Riemannian metrics by Green \cite{Green1958} and of Finsler metrics by Foulon \cite{Foulon1992}. 
Its construction for energy levels without conjugate points of convex Hamiltonians is due to Contreras and Iturriaga in \cite{CI-1999}. In examples such as Riemannian manifolds with nonpositive curvature, Green bundles are continuous, but in general they are not continuous \cite{BBB1987}. If the geodesic flow is Anosov, Green bundles coincide with the invariant bundles of the Anosov dynamics. 

Our main result is the following theorem.

\begin{maintheorem}
Let $(M, F)$ be a $C^\omega$ closed Finsler manifold without conjugate points, where Green bundles are continuous, and there exists a hyperbolic closed geodesic. If the S- curvature is constant, then $(M,F)$ is Riemannian.
\end{maintheorem}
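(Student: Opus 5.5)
The plan is to show that the Cartan vector field vanishes identically on the unit tangent bundle $SM$. By the result of \cite{Mo2008} quoted above, constant $S$-curvature on a closed manifold forces $S\equiv 0$.

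\textbf{Step 1: the Cartan field is a bounded Jacobi field.} Under $S\equiv 0$ I would use Shen's identity from \cite{Shen2005}. It says that, along each geodesic $\gamma_v$, the Cartan (mean Cartan) vector field $\eta(\dot\gamma_v(t))$ satisfies the Jacobi equation of the Finsler Lagrangian. The quickest route is the Landsberg-type relation $\dot{\mathbf I}=\mathbf J$ combined with the known formula expressing $\dot{\mathbf J}$ through $S$ and the flag curvature; with $S=0$ the extra terms drop out. The field is $\gamma$-orthogonal to $\dot\gamma$, and since $SM$ is compact it is uniformly bounded for all $t\in\mathbb R$. So for each $v\in SM$ the vector $\xi(v)=(\eta(v),\dot\eta(v))$ gives a tangent vector to $SM$, and its orbit under the differential of the geodesic flow is bounded.

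\textbf{Step 2: the Cartan field lies in both Green bundles.} In a manifold without conjugate points, a Jacobi field that stays bounded in forward time has initial data in the stable Green bundle $E^s$. The argument is Green's: a bounded perpendicular Jacobi field is a limit of fields vanishing at time $T\to\infty$. The same reasoning in backward time places it in $E^u$, so $\xi(v)\in E^s(v)\cap E^u(v)$ for every $v$. The set $\mathcal R=\{v: E^s(v)\cap E^u(v)=0\}$ is invariant and open, because the Green bundles are continuous. On $\mathcal R$ we therefore get $\eta=0$.

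\textbf{Step 3: $\mathcal R$ is nonempty, and analyticity finishes.} Let $\theta$ be the hyperbolic closed geodesic. Along its orbit the linearized Poincaré map has no eigenvalues of modulus one. The bounded invariant directions must then be trivial, so $E^s\cap E^u=0$ there; equivalently, a bounded Jacobi field along a hyperbolic periodic orbit is zero. Hence $\theta\in\mathcal R$, and $\mathcal R$ is a nonempty open subset of $SM$. Since $F$ is $C^\omega$, the Cartan tensor, and thus the field $\eta$ (or the mean Cartan torsion $I$), is real analytic on the connected manifold $SM$; we may assume $\dim M\ge 2$. Vanishing on a nonempty open set forces $\eta\equiv 0$ on $SM$. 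By Deicke's theorem, $I\equiv0$ implies that $F$ is Riemannian.

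\textbf{Main obstacle.} The delicate point is Step 2. We need that bounded Jacobi fields belong to $E^s\cap E^u$ in the Finsler setting. We also need that continuity of the Green bundles makes $\mathcal R$ open and that the hyperbolic closed geodesic lies in it. My first approach is to use Foulon's construction of the Green bundles as limits of Lagrangian graphs of Riccati solutions. A forward-bounded Jacobi field is then compared with the fields vanishing at $T$, using the no-conjugate-points comparison (the index form is positive on fields vanishing at the endpoints). If that fails, an alternative is to work only on the orbit of $\theta$. There $\eta$ is a bounded solution of a hyperbolic linear periodic ODE, hence zero. This kills $\eta$ only on a closed curve, however, so analyticity would have to be pushed further, for instance by using that the Green bundles stay transverse on a neighbourhood of $\theta$. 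That again needs their continuity.
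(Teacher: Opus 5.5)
Your architecture matches the paper's. Shen's identity makes $\vec{\mathbf I}$ a bounded Jacobi field. Bounded perpendicular Jacobi fields lie in $G^s\cap G^u$; this is the paper's Lemma~\ref{stable-bounded}, whose proof rests on the uniform divergence of radial Jacobi fields. Continuity of the Green bundles makes $\mathcal R$ open, and the identity principle for real-analytic functions plus Deicke finish. The gap is in Step~3, where you show $\theta\in\mathcal R$.

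You argue that bounded invariant directions along a hyperbolic orbit are trivial, ``so $E^s\cap E^u=0$ there; equivalently, a bounded Jacobi field along a hyperbolic periodic orbit is zero.'' This is not an equivalence. Step~2 only gives \emph{bounded $\Rightarrow$ in $G^s\cap G^u$}. To pass from ``bounded Jacobi fields along $\theta$ vanish'' to $G^s(\theta)\cap G^u(\theta)=0$, you need the converse: every Jacobi field with initial data in $G^s\cap G^u$ is bounded. That converse is not available without conjugate points. Green stable Jacobi fields need not be bounded in forward time; such a bound is Eberlein's bounded-asymptote condition, which is automatic without focal points but is an extra hypothesis here. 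As written, nothing excludes a nonzero vector of $G^s(\theta)\cap G^u(\theta)$ whose Jacobi field grows exponentially in one time direction.

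The fix uses your Step~2 the other way round, plus a dimension count.
\begin{itemize}
\item Let $N(\theta)=E^s_h\oplus E^u_h$ be the hyperbolic splitting along the closed orbit. Since the linearized Poincar\'e map is symplectic on $N(\theta)$, its eigenvalues pair as $\lambda,\lambda^{-1}$, so each summand has dimension $n-1$.
\item A vector $\xi\in E^u_h$ gives a perpendicular Jacobi field with $\|J_\xi(t)\|\le\|d\phi_t\xi\|$, which decays exponentially as $t\to-\infty$. It is therefore bounded for $t\le 0$, and the backward version of Step~2 gives $E^u_h\subset G^u(\theta)$. Likewise $E^s_h\subset G^s(\theta)$.
\item The Green bundles also have dimension $n-1$, so $G^s(\theta)=E^s_h$ and $G^u(\theta)=E^u_h$. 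Hence $G^s(\theta)\cap G^u(\theta)=0$.
\end{itemize}
This is exactly what the paper obtains by citing Proposition~B of \cite{CI-1999}. With this replacement your argument becomes the paper's proof.

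Your fallback idea (kill $\vec{\mathbf I}$ on the periodic orbit, then use transversality near $\theta$) does not avoid the issue. Transversality on a neighbourhood of $\theta$ follows from continuity only once transversality at $\theta$ itself is known, and that is the point in question.
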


The $C^\infty$ version follows by assuming the transitivity of the geodesic flow:

\begin{maintheorem}
Let $(M,F)$ be a $C^\infty$ closed Finsler manifold without conjugate points. Assume that the geodesic flow is transitive in $T_1M$, that the Green bundles are continuous, and that there exists a hyperbolic closed geodesic. If the $S$-curvature is constant, then $(M,F)$ is Riemannian.
\end{maintheorem}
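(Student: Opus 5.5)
We sketch the proof of the $C^\infty$ theorem (Theorem 2). The analytic theorem (Theorem 1) follows the same scheme, with transitivity replaced by analytic continuation.

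\emph{Step 1: reduce to a Jacobi-field statement.} Since $M$ is closed, constant $S$-curvature forces $S\equiv 0$ by \cite[Lemma 4.1]{Mo2008}. Following Shen's computation \cite{Shen2005}, for every $v\in T_1M$ the Cartan vector field $J_v(t)=\mathcal{I}_{\gamma_v'(t)}$ along the geodesic $\gamma_v$ is then a Jacobi field. Here $\mathcal{I}$ is the mean Cartan tensor, dualized by $g_{\gamma'}$; note that a Finsler metric is Riemannian iff $\mathcal{I}\equiv0$, by Deicke's theorem. By compactness of $T_1M$, $J_v$ is uniformly bounded on $\mathbb{R}$. Moreover $J_v$ is $g_{\gamma'}$-orthogonal to $\gamma_v'$, because $\mathcal{I}_y(y)=0$. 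So it suffices to show that $\mathcal{I}$ vanishes on a dense set of $T_1M$; continuity then gives $\mathcal{I}\equiv 0$.

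\emph{Step 2: bounded Jacobi fields lie in the Green bundles.} On a manifold without conjugate points, a Jacobi field bounded on $[0,\infty)$ has initial data in the stable Green subspace $E^s(v)$. This is the standard argument: $E^s$ is the limit of the subspaces of Jacobi fields vanishing at time $T$ as $T\to\infty$, and a bounded field can be compared with these. Symmetrically, boundedness on $(-\infty,0]$ puts the initial data in $E^u(v)$. Hence the data $(J_v(0),J_v'(0))$ lie in $E^s(v)\cap E^u(v)$ for every $v$. The section $v\mapsto (J_v(0),J_v'(0))$ is a continuous (indeed smooth) $d\phi_t$-invariant section of this intersection.

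\emph{Step 3: use the hyperbolic closed geodesic and transitivity.} Along the hyperbolic closed orbit $\theta$ we have $E^s\cap E^u=\{0\}$ in the normal direction, so $\mathcal{I}$ vanishes on $\theta$. We now need to spread this vanishing. Since the Green bundles are continuous, the hyperbolic splitting persists in an open neighborhood $U$ of $\theta$ in $T_1M$: the intersection $E^s\cap E^u$ is an upper semicontinuous object, and $E^s\oplus E^u$ having full dimension is an open condition. Hence $E^s(w)\cap E^u(w)=\{0\}$ for all $w\in U$. Therefore $J_w\equiv0$, that is, $\mathcal{I}$ vanishes along every orbit meeting $U$. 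By transitivity there is a dense orbit, and it meets $U$. So $\mathcal{I}$ vanishes on a dense set, hence everywhere, and $F$ is Riemannian.

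For Theorem 1, we replace the last step by analyticity. $\mathcal{I}$ vanishes on the open set $U$ and is real analytic on the connected manifold $T_1M$, or on each component of it, so $\mathcal{I}\equiv0$.

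\emph{Main obstacle.} The delicate step is Step 3: showing that transversality $E^s\cap E^u=\{0\}$ at the hyperbolic orbit propagates to an open neighborhood. In general the Green bundles are only semicontinuous, and this openness fails. Continuity of the Green bundles is exactly what makes $w\mapsto \dim(E^s(w)\cap E^u(w))$ upper semicontinuous, giving an open set where it is zero. Step 2 is a second point needing care in the Finsler setting: one must run the bounded-Jacobi-field argument with the osculating metrics $g_{\gamma'}$, adapting Foulon's construction of Green bundles \cite{Foulon1992} or that of \cite{CI-1999}.
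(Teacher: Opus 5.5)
Your proposal is correct and follows essentially the same route as the paper: Shen's lemma makes $\vec{\mathbf{I}}$ a bounded perpendicular Jacobi field, so by the bounded-Jacobi-field lemma it lies in $G^s\cap G^u$ and vanishes on the set $E$ where the Green bundles are transverse; $E$ is open by continuity of the Green bundles, nonempty because of the hyperbolic closed geodesic, and dense by transitivity, after which continuity and Deicke's theorem finish the proof. The differences are cosmetic: you check directly that the hyperbolic orbit lies in $E$ rather than citing Contreras--Iturriaga, and you use that $\vec{\mathbf{I}}$ vanishes along the whole orbit of a point of $E$ rather than invoking the flow-invariance of $E$.
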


The proof of Theorem A relies on the fact that, for manifolds with vanishing $S$-curvature, the Cartan vector field is a Jacobi field along every geodesic of $(M,F)$. This fact was used by Shen in \cite{Shen2005}. Roughly speaking, what we show is that if the set of points where the Green bundles are transversal is nonempty, then the Cartan vector field vanishes on such a set. This follows from a global property of Jacobi fields in manifolds without conjugate points: the divergence of radial Jacobi fields (see Lemma 2.13).

Visibility manifolds, introduced in the Riemannian setting in \cite{EO1973} and in the Finsler setting in \cite{CBR2020}, provide a natural framework that extends previous rigidity results by Shen and Akbar-Zadeh. Klingenberg proved that Riemannian manifolds with Anosov geodesic flow have uniform visibility universal covering \cite{Klingenberg1974}. These ideas are extended to the Finsler setting using the theory developed in \cite{CBR2020}. It is also shown there that the geodesic flow of compact Finsler manifolds without conjugate points, having a Finsler uniform visibility universal covering (see Definition 3.2 in \cite{CBR2020}), is transitive. Combining this with Theorem B, we obtain:

\begin{maintheorem}\label{visibility corollary}
Let $(M, F)$ be a compact $C^\infty$ Finsler manifold without conjugate points whose universal covering, endowed with the pullback metric, is a uniform visibility manifold, and assume that the Green bundles are continuous and that there exists a hyperbolic closed geodesic. If the $S$-curvature is constant, then $(M,F)$ is Riemannian.
\end{maintheorem}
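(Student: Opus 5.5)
The plan is to reduce Theorem C to Theorem B by checking its single missing hypothesis, transitivity of the geodesic flow on $T_1M$. The other hypotheses of Theorem B are assumed verbatim in Theorem C: $M$ is compact and $C^\infty$, there are no conjugate points, the Green bundles are continuous, some closed geodesic is hyperbolic, and the $S$-curvature is constant.

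\textbf{Step 1: transitivity from uniform visibility.} We invoke the result of \cite{CBR2020} quoted above. If a compact Finsler manifold without conjugate points has a universal covering $(\tilde M,\tilde F)$, with the pullback metric, that is a Finsler uniform visibility manifold in the sense of \cite[Definition 3.2]{CBR2020}, then its geodesic flow is transitive on $T_1M$. We need to confirm that the notion of visibility in our hypothesis is exactly the one used there, and that the regularity and compactness assumptions match. Closed manifolds are compact, and $C^\infty$ is the category in which \cite{CBR2020} works.

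The argument behind that result, if it must be recalled, is the Eberlein--O'Neill approach adapted to Finsler metrics. Uniform visibility gives that the ideal boundary $\tilde M(\infty)$ has a minimality property: every orbit of $\pi_1(M)$ acting on $\tilde M(\infty)$ is dense. Together with the density of periodic orbits and the fact that the nonwandering set is all of $T_1M$ (because of Liouville measure, which makes sense for Finsler geodesic flows on the compact unit tangent bundle), this produces a dense orbit.

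\textbf{Step 2: apply Theorem B.} With transitivity established, all hypotheses of Theorem B hold, so $(M,F)$ is Riemannian.

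The only real obstacle is Step 1, that is, making sure the cited transitivity result applies without extra hypotheses hidden in \cite{CBR2020}, such as reversibility of $F$ or bounds on the asymmetry. If such a hypothesis appeared, I would first try to recover it from compactness: the reversibility constant of a Finsler metric on a compact manifold is finite, which is usually all that is needed to compare forward and backward geodesic rays in the visibility arguments.
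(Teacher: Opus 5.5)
Your proposal is correct and follows the paper's own route: the paper obtains Theorem C by quoting \cite{CBR2020} for transitivity of the geodesic flow of compact Finsler manifolds without conjugate points whose universal covering is a uniform visibility manifold, and then applying Theorem B, whose other hypotheses are assumed verbatim. Your sketch of why the cited transitivity result holds is not load-bearing, and the paper does not supply one either.
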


Theorems A and B are proved in Section 3, after we give, in Section 2, the preliminary definitions and results that will be used in its proofs.

\section{Preliminaries} \label{preliminaries}

In this section, we give some basic definitions and results which are necessary in the following sections. For more details, we are 
referred to see \cite{BCS} and \cite{CS2005}. 

Let $M$ be an n-dimensional, $C^{\infty}$ manifold. We denote by $T_{p}M$ 
the tangent space at $p\in M$, and by $TM$ the tangent bundle of $M$.
In canonical coordinates, an element of $TM$ has the form $(x,y)$, where $x \in M$ and $y \in T_xM$. Let $TM_0$ be the complement of the zero section, that is, $TM_0= \{(x,y) \in TM; \, y \neq 0 \}$. A $C^{k}$
($k\geq 2$) \textit{Finsler structure} on $M$ is a continuous function
$F:TM \rightarrow [0,+ \infty)$ with the following properties: 
\begin{itemize}
\item $F$ is $C^k$ on $TM_0$; 
\item $ F(x,\lambda y)=\lambda F(x,y) \; \forall \; \lambda >0 \mbox{ and for every } (x,y) \in TM, $
that is, $F$ is positively homogeneous of degree one in $y$; 
\item The Hessian matrix of $F^2=F\cdot F$
$$g_{ij} := \frac{1}{2}\frac{\partial ^2}{\partial y^i \partial y^j}F^2 $$
is positive definite on $TM_0$.
\end{itemize}

The positive homogeneity of $F$ implies that $g_{(x,y)} = g_{(x,\lambda y)}$ for every $\lambda>0$. On the other hand, $g_{(x,y)}$ is independent of $y$ if and only if it is a Riemannian metric on $M$. With this in mind, it is natural to define the following non-Riemannian quantity, called the \textit{Cartan tensor}, by

$$
\textbf C_{y}(u,v,w)   : =\frac{1}{4}\frac{\partial^3}{\partial s \partial t \partial r}F^2(x,y+su+tv+rw)\big|_{r,s,t=0},
$$
where $u,v,w \in T_xM$, $y \in T_xM\setminus\{0\}$ and the \textit{mean Cartan torsion} by
$$
\textbf I_{y}(u):=tr_g(\textbf C_{(x,y)}(u,\cdot,\cdot))
$$
The \textit{Cartan vector field} $\vec{\mathbf{I}}$ is the $g$-dual vector of $\textbf{I}$ given by
\begin{align} \label{a}
    \mathbf {I}_{y}(u)=g_{(x,y)}(\vec {\mathbf{I}}({y}),u).
\end{align}

An interesting property of $\mathbf{C}$ is that
\begin{equation}
\textbf{C}_y(y,u,v) = \textbf{C}_y(u,y,v) = \textbf{C}_y(u,v,y) = 0.
\end{equation}\label{C es (-1)-homogeneo}
In fact, Euler’s identity implies that $(F^2)_{ij}$ is 0-homogeneous. In particular, this means that 
$(F^2)_{ijk}\, y^i = (F^2)_{ijk}\, y^j = (F^2)_{ijk}\, y^k = 0$. As a consequence, we obtain
\begin{equation}\label{b}
     \textbf{I}_y(y) = 0.
\end{equation}\label{ortogonal cartan}
Combining \ref{a} and \ref{b} we have that  $\vec {\mathbf{I}}_y$ is $g_y$-ortogonal to $y$.\\
The tensor $\textbf{I}$ allows us to derive rigidity conditions due to the following result.

\begin{theorem}[Deicke] \label{Deicke}
Let $(M,F)$ be a Finsler manifold. If $\textbf{I}=0$ then $g$ is Riemannian.
\end{theorem}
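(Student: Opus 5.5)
The plan is to fix a point $x\in M$ and work in the single Minkowski space $(T_xM, F_x)$. Deicke's theorem is really a statement about one Minkowski norm: if its mean Cartan torsion vanishes identically, the norm is Euclidean. Set $\phi(y)=\tfrac12 F_x^2(y)$, so that $g_y=\mathrm{Hess}\,\phi(y)$. Then $\mathbf I_y(u)=\tfrac12\,\partial_u \log\det g_y$, since $\partial_u \log\det g = \mathrm{tr}(g^{-1}\partial_u g)$ and $\partial_u g_{ij}=2\mathbf C(u,\cdot,\cdot)$. Hence the hypothesis $\mathbf I=0$ says exactly that $\det(g_{ij}(y))$ is constant on $T_xM\setminus\{0\}$, connectedness being used when $n\ge2$. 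Because $\det g_y$ is $0$-homogeneous, it is enough to check this on the indicatrix.

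Next, let $B=\{F_x\le 1\}$, a smooth strictly convex body. I would consider the Legendre transform $\phi^*$ on $T_x^*M$, which is $\tfrac12 F_x^{*2}$ for the dual norm. The hypothesis gives $\det \mathrm{Hess}\,\phi = c$ on $T_xM\setminus\{0\}$. Since $\phi$ is $C^1$ at the origin, convex, and $2$-homogeneous, $\phi$ is a convex solution of the Monge–Ampère equation $\det D^2\phi=c$ on all of $\mathbb R^n$ in the Alexandrov sense. The origin carries no singular measure: the gradient image of a small ball has measure $O(r^n)$, which is consistent with the density $c$, and the homogeneity makes the Monge–Ampère measure of $\{0\}$ equal to $0$.

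Then I would invoke the Jörgens–Calabi–Pogorelov theorem: an entire convex solution of $\det D^2\phi=c>0$ on $\mathbb R^n$ is a quadratic polynomial. Combined with homogeneity, this gives $\phi(y)=\tfrac12 a_{ij}y^iy^j$, so $g_y=(a_{ij})$ is independent of $y$. Doing this at every $x$ shows that $g$ is Riemannian.

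The main obstacle is the regularity at the origin needed to apply Pogorelov, because $\phi$ is only $C^1$ at $0$. I would handle it either by the Alexandrov-solution version of the theorem (Caffarelli's regularity gives strict convexity and smoothness away from $0$, and the singularity at a single point is removable since the measure is absolutely continuous), or by a more elementary route. That route uses Blaschke's affine geometry of the indicatrix: constant $\det g$ means the centroaffine normal is proportional to the position vector, so the indicatrix is an affine sphere centered at $0$. By Blaschke's theorem on compact elliptic affine spheres (Calabi's in higher dimension), it is an ellipsoid, which is again quadratic $F_x^2$. I would use the second route if the removable-singularity argument becomes delicate.
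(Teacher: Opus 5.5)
Your proof is correct, but it takes a different and much heavier route than the one behind the citation. The paper gives no argument of its own and defers to \cite{BCS}, where Deicke's theorem is proved by the classical elementary argument (essentially Brickell's maximum-principle proof). That argument starts exactly as you do: from $\mathbf I=\tfrac12\,d\log\det g$, the quantity $\log\det(\phi_{ij})$ is constant on $T_xM\setminus\{0\}$, with $\phi=\tfrac12F_x^2$. Instead of solving the Monge--Amp\`ere equation, one differentiates it twice, which gives for each fixed $v\in T_xM$
\[
g^{ij}(y)\,\frac{\partial^2}{\partial y^i\partial y^j}\bigl(g_y(v,v)\bigr)=4\,\bigl|\mathbf C_y(v,\cdot,\cdot)\bigr|^2_{g_y}\ \ge 0 .
\]
Since $y\mapsto g_y(v,v)$ is $0$-homogeneous, it attains its maximum on the indicatrix, i.e.\ at an interior point of the connected set $T_xM\setminus\{0\}$. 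Hopf's strong maximum principle then makes it constant, the right-hand side vanishes, and $\mathbf C=0$. Homogeneity supplies the interior maximum, so the origin never has to be examined. Your route, by contrast, spends all its effort at the origin and needs deep nonlinear input: Caffarelli's regularity for Alexandrov solutions, or a J\"orgens--Calabi--Pogorelov Liouville theorem.

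Two comments on your version. First, the Liouville theorem is superfluous. Smoothness away from $0$ is given; what Caffarelli's interior theory actually provides is $\phi\in C^2$ near $0$, and strict convexity is immediate from $g_y>0$ and homogeneity. Once $\phi\in C^2$ near $0$, $2$-homogeneity alone forces $\phi(y)=\tfrac12 D^2\phi(0)(y,y)$. Second, the affine-sphere fallback is a reformulation rather than a proof. On the indicatrix $S$, put $h=g|_{TS}$ and $\theta=\det(\cdot,\dots,\cdot,y)$. Then for any tangent frame one has $\det(g_{ij})=\det\bigl(h(X_a,X_b)\bigr)/\theta(X_1,\dots,X_{n-1})^2$. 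Hence constancy of $\det g$ is \emph{equivalent} to $S$ being an affine sphere centred at $0$, and the classification of compact affine spheres is the same statement in affine language.

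What your approach buys is a conceptual picture: the indicatrix is an affine sphere and $\phi$ is an entire solution of $\det D^2\phi=c$. The classical approach buys a self-contained proof that uses only linear elliptic theory.
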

See (Theorem 14.4.1, \cite{BCS}) for a proof.

\subsection{S-curvature} \label{S-curvature}

In this subsection we follow \cite{CS2005} as main reference.

Let $F$ be a Finsler metric on a manifold $M$. For every tangent space $T_xM$, the Minkowski norm $F_x = F |_{T_xM}$ defines
\begin{equation*}
    \sigma_{F_x} = \frac{\text{vol}(B^n)}{\text{vol}(\hat{B}^n)},
\end{equation*}
where $B^n$ is the unit ball in $\mathbb{R}^n$, and $\hat{B}^n = \{(y^i) \in \mathbb{R}^n \mid F |_{T_xM} (y^i b_i) < 1\}$, where $\{b_i\}$ is a fixed basis of $T_xM$. 
Now, let
\begin{equation*}
    \tau_x := \ln \frac{\sqrt{\det(g_{ij}(y))}}{\sigma_{F_x}}.
\end{equation*}

This function $\tau_x = \tau_x(y)$ does not depend on the choice of $\{b_i\}$. The \textit{distortion} of $F$ is given by $\tau = \tau(x, y)$ on $TM \setminus \{0\}$.

The rate of change of the distortion along geodesics defines the \textit{S-curvature}:
\begin{equation*}
    S(x, y) := \frac{d}{dt} \Big|_{t=0} \tau(\sigma(t), \sigma'(t)),
\end{equation*}
where $\sigma(t)$ is a geodesic with $\sigma(0) = x$ and $\sigma'(0) = y$.

We say that an n-dimensional Finsler manifold $(M,F)$ has \textit{constant S-curvature} if there exists a constant $c$ such that
\begin{equation*}
    S = (n + 1)cF.
\end{equation*}

Every Riemannian metric has constant S-curvature.
Example 5.2.1 in \cite{CS2005} gives a Finsler metric with constant S-curvature, where 
$F$ is a Randers metric that is not Riemannian. Also, every Berwald metric has zero constant $S$-curvature (see \cite[Section 2]{Shen1997} or \cite[Proposition 4.3]{Shen2016}). When $(M,F)$ is a closed manifold, $S$ is constant if and only if $S=0$ (for details, see
{ \cite[Lemma 4.1]{Mo2008} or
\cite{Shen2016}, page 73).

We recall that there are several volume forms on a Finsler manifold, different from the Riemannian case. In the above definition of S-curvature, we are considering the Busemann-Hausdorff volume form.

\subsection{Chern-Rund connection, Jacobi fields and conjugate points}

The fundamental tensor $ g_{i j(x, y)} d x^{i} \otimes d x^{j} $ is very convenient to study Finsler geometry from a Riemannian viewpoint. In the next lemma we summarize some of its basic properties (see \cite{BCS},
for details).

\begin{lemma}\label{Covariantderivate}
    
 Let $(M, F)$ be a $C^{4}$ Finsler manifold, let $\sigma(t)$ be a $C^{\infty}$ curve, and $\sigma(t, u): \triangle=\{(t, u) ; 0 \leq t \leq r,-\varepsilon<u<\varepsilon\} \longrightarrow M$ be a $C^{2}$ variation of $\sigma(t, 0)=$ $\sigma(t)$ by $C^{\infty}$ curves. Then, in the tangent space $T_{\sigma(t, u)} M$ the inner product
$$ g_{T}:=g_{i j(\sigma(t, u), T(t, u))} d x^{i} \otimes d x^{j} $$
where $T=T(t, u):=\sigma_{*} \frac{\partial}{\partial t}=\frac{\partial \sigma}{\partial t}$, satisfies the following properties:

\begin{enumerate}
\item  $g_{T}(T, T)=F^{2}(T)$.

\item  $\sigma(t)$ is a Finslerian geodesic if and only if

$$ \frac{d}{d t} g_{T}(V, W)=g_{T}\left(D_{T} V, W\right)+g_{T}\left(V, D_{T} W\right) $$
where $V$ and $W$ are two arbitrary vector fields along $\sigma$. The operator $D_{T}=$ $\frac{d}{d t}$ is called covariant differentiation with reference vector $T$.

\item  In particular, Finslerian geodesics satisfy
$$ D_{T}\left[\frac{T}{F(T)}\right]=0 .$$
The constant speed Finslerian geodesics $F(v)=c$ are the solutions of
$$ D_{T} T=0 .$$
\item  Assume that $\sigma(t)$ is a unit speed geodesic. Then a Jacobi field $J(t)=$ $\frac{\partial \sigma}{\partial u}(t, 0)$ along $\sigma(t)$ satisfies
$$ D_{T} D_{T} J+R(J, T) T=0 $$
\end{enumerate}
where $R$ is the Jacobi tensor of the Finsler metric (We shall denote as usual $\left.J^{\prime \prime}=D_{T} D_{T} J, J^{\prime}=D_{T} J\right)$. 
\end{lemma}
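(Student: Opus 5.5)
The statement to prove is Lemma \ref{Covariantderivate}, a collection of standard facts about the osculating inner product $g_T$ and the Chern--Rund connection. My plan is to work in local coordinates with the Chern connection coefficients $\Gamma^i_{jk}(x,y)$. The covariant derivative of a field $V$ along $\sigma$ with reference vector $T$ is
\[
D_T V=\Big(\tfrac{dV^i}{dt}+V^jT^k\Gamma^i_{jk}(\sigma,T)\Big)\partial_{x^i}.
\]
I will then use the two defining properties of the Chern connection: torsion freeness, $\Gamma^i_{jk}=\Gamma^i_{kj}$, and almost $g$-compatibility,
\[
dg_{ij}-g_{kj}\omega^k_i-g_{ik}\omega^k_j=2C_{ijk}\,\delta y^k/F,
\]
with $\delta y^k=dy^k+y^jN^k_j$.

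Item (1) is immediate. Euler's theorem applied to the $2$-homogeneous function $F^2$ gives $g_{ij}(x,y)y^iy^j=F^2(x,y)$; evaluating at $y=T$ yields $g_T(T,T)=F^2(T)$.

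For item (2), I differentiate $g_{ij}(\sigma,T)V^iW^j$ in $t$ and substitute the almost-compatibility identity. This gives
\[
\tfrac{d}{dt}g_T(V,W)=g_T(D_TV,W)+g_T(V,D_TW)+\tfrac{2}{F}\,C_T(V,W,D_TT).
\]
Here $\delta y^k$ pulled back along the lift $t\mapsto(\sigma,T)$ is exactly $(D_TT)^k$. If $D_TT=0$, the Cartan term drops out and the metric property holds. For the converse, I use that the equation $D_TT=0$ together with constant speed is the geodesic equation; this is where the geodesic spray coefficients $G^i=\tfrac12\Gamma^i_{jk}y^jy^k$ enter. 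Making the converse fully precise is the main obstacle. If the product rule holds for all $V,W$, then $C_T(V,W,D_TT)=0$ for all $V,W$. This alone does not force $D_TT=0$ in the non-Riemannian case. So I would instead use the product rule with $V=W=T$ to get constant speed, and $g_T(D_TT,W)$ arguments via the first variation formula, to conclude that $\sigma$ is a geodesic. This reduces the converse to the standard statement in \cite{BCS}, which I would cite for the precise formulation.

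For item (3), the constant-speed geodesic equation in Chern coordinates reads $\ddot\sigma^i+\Gamma^i_{jk}\dot\sigma^j\dot\sigma^k=0$, which is literally $D_TT=0$. For a general parametrization, the $0$-homogeneity of $\Gamma$ in $y$ allows one to normalize $T/F(T)$, and along a geodesic $F(T)$ is constant by (2) with $V=W=T$.

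For item (4), I use torsion freeness to get $D_T U=D_U T$, where $U=\partial\sigma/\partial u$. Differentiating $D_TT=0$ in $u$ and commuting $D_UD_T$ with $D_TD_U$ produces the curvature term $R(U,T)T$ together with a term involving $P$ (the Chern curvature's $hv$-part). That term is contracted with $T$ in the slots that vanish because $P_{j\,kl}^{\ i}y^j=0$, so $D_TD_TJ+R(J,T)T=0$.
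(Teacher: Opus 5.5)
The paper does not prove this lemma; it defers to \cite{BCS}. Your computations for (1), the forward half of (2), (3) and (4) are the standard ones found there. The genuine gap is the converse in (2). It cannot be closed, because that implication is false.

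Your own identity $\frac{d}{dt}g_T(V,W)=g_T(D_TV,W)+g_T(V,D_TW)+2\,\mathbf{C}_T(V,W,D_TT)$ shows that the product rule for all $V,W$ is equivalent to $\mathbf{C}_T(\cdot,\cdot,D_TT)=0$, and to nothing stronger. (With the paper's normalization of $\mathbf{C}$ the factor is $2$, not $2/F$.)

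Taking $V=W=T$ extracts nothing. Since $\mathbf{C}_T(T,\cdot,\cdot)=0$, the relation $\frac{d}{dt}F^2(T)=2g_T(D_TT,T)$ holds along every curve, so it cannot give constant speed.

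No first-variation argument can finish either, because there are counterexamples:
\begin{itemize}
\item If $F$ is Riemannian, then $\mathbf{C}\equiv 0$ and $D_T$ is the Levi-Civita derivative, so the product rule holds along every curve, geodesic or not.
\item For arbitrary $F$, take a reparametrization $\sigma=\gamma\circ\phi$ of a unit speed geodesic with $\phi''\neq 0$. By the $0$-homogeneity of $\Gamma^i_{jk}$ in $y$, $D_TT=(\phi''/\phi')\,T$. So the product rule holds, although $F(T)=\phi'$ is not constant.
\end{itemize}
\cite{BCS} contains only the forward implication. That is also the only direction the paper uses: compatibility along $\gamma_\theta$ when deriving the matrix Jacobi equation. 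Prove ``geodesic $\Rightarrow$ product rule'' and state the correct equivalence with $\mathbf{C}_T(\cdot,\cdot,D_TT)=0$, instead of deferring a false converse to a citation.

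There is a smaller slip in (4): the identity $P_j{}^i{}_{kl}\,y^j=0$ that you invoke is false in general. From $P_j{}^i{}_{kl}=-F\,\partial\Gamma^i_{jk}/\partial y^l$ and $\Gamma^i_{jk}y^j=N^i_k=\partial G^i/\partial y^k$ one gets $y^jP_j{}^i{}_{kl}=-F\bigl(\partial^2G^i/\partial y^k\partial y^l-\Gamma^i_{kl}\bigr)$. Up to sign and normalization this is the Landsberg tensor, which vanishes only for Landsberg metrics. With $D_TT=0$, the $hv$-part of $D_TD_UT-D_UD_TT$ is $P_j{}^i{}_{kl}T^jT^k(D_UT)^l/F$. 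What kills it is the double contraction $y^jy^kP_j{}^i{}_{kl}=0$, obtained by differentiating $\Gamma^i_{jk}y^jy^k=2G^i$ in $y^l$. Also, $\delta y^k=dy^k+N^k_j\,dx^j$, not $dy^k+y^jN^k_j$. These slips are repairable; the converse in (2) is not.
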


Lemma \ref{Covariantderivate} reduces many Finsler problems concerning Jacobi fields to Riemannian ones. We shall often call the inner product $g_{T}$ the adapted Riemannian metric. Throughout the paper, all covariant differentiations will be carried out with reference vector $T$.

\begin{definition}We say that $q$ is \textit{conjugate} to $p$ along a geodesic $\sigma$ if there exists a nonzero Jacobi field $J$ along $\sigma$ which vanishes at $p$ and $q$. We say that $(M, F)$ \textit{has no conjugate points} if no geodesic has conjugate points.
\end{definition}

\begin{proposition}\label{constant S curvature}
    Let $(M,F)$ be a Finsler manifold, and let $\gamma: (a,b) \to M$ be a geodesic. If the $S$-curvature is constant, then the Cartan vector field $\vec{\mathbf{I}}$ along $\gamma$, defined by $\vec{\mathbf{I}}(t) = \vec{\mathbf{I}}(\gamma(t),\dot{\gamma}(t))$, is a Jacobi field.
\end{proposition}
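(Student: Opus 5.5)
We derive the Jacobi equation for $\vec{\mathbf I}$ from two standard identities of Finsler geometry in \cite{BCS, Shen2005}: the formula for the derivative of the mean Cartan torsion along geodesics (the mean Landsberg curvature), and Shen's identity linking the Landsberg curvature, the $S$-curvature and the flag curvature. Work along a unit speed geodesic $\gamma$ with $T=\dot\gamma$, using $D_T$ with reference vector $T$ as in Lemma~\ref{Covariantderivate}. Write $\mathbf J := \dot{\mathbf I}$ for the mean Landsberg curvature, defined by $\mathbf J_{\dot\gamma}(u(t)) = \frac{d}{dt}\mathbf I_{\dot\gamma}(u(t))$ for $D_T$-parallel $u$. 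Item (2) of Lemma~\ref{Covariantderivate} says $g_T$ is compatible with $D_T$ along geodesics. Hence $D_T\vec{\mathbf I}$ is the $g_T$-dual of $\mathbf J$, and $D_TD_T\vec{\mathbf I}$ is the $g_T$-dual of $\dot{\mathbf J}$, the derivative of $\mathbf J$ along the geodesic.

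The key input is Shen's identity, valid on any Finsler manifold (\cite[Chapter 10]{CS2005}, \cite{Shen2005}):
\begin{equation*}
S_{\cdot k|m}y^m - S_{|k} = -\tfrac{1}{3}\bigl(2\mathrm{Ric}_{\cdot k} + \mathrm{Ric}_{\cdot\cdot k\, l}\,y^l\bigr) \;\text{-type terms} \; + \;\ldots
\end{equation*}
More precisely, we use its form expressing $\dot{\mathbf J}_k$, i.e.\ $\mathbf J_{k|m}y^m$, as
\begin{equation*}
\mathbf J_{k|m}y^m + \mathbf I_m R^m{}_k = \text{(terms in $S$ and its horizontal and vertical derivatives)}.
\end{equation*}
Here $R^m{}_k$ is the Riemann curvature (Jacobi tensor), so $\mathbf I_mR^m{}_k$ is the $g$-dual of $R(\vec{\mathbf I},T)T$. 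We would cite this identity rather than rederive it. Deriving it would require commuting the vertical and horizontal Chern derivatives (Ricci identities) on $\tau$, and that is the main technical obstacle.

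Now impose $S=(n+1)cF$. Since $F$ is horizontally parallel ($F_{|k}=0$), we get $S_{|k}=0$ and $S_{|k|l}=0$. The vertical derivatives $S_{\cdot k}=(n+1)c\,g_{kl}y^l/F$ are horizontally parallel along geodesics. Hence all $S$-terms on the right-hand side cancel, leaving
\begin{equation*}
\dot{\mathbf J}_k + \mathbf I_m R^m{}_k = 0 .
\end{equation*}
Alternatively, one can first invoke \cite[Lemma 4.1]{Mo2008} when $M$ is closed to reduce to $S=0$. We would not rely on this, because the proposition is stated for arbitrary $M$ and the identity handles $c\neq0$ directly.

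Dualizing with $g_T$, which is legitimate by the compatibility above, gives $D_TD_T\vec{\mathbf I}+R(\vec{\mathbf I},T)T=0$. This is exactly the Jacobi equation in item (4) of Lemma~\ref{Covariantderivate}. The field $\vec{\mathbf I}(t)$ is smooth along $\gamma$, and solutions of this linear ODE are Jacobi fields, so $\vec{\mathbf I}$ is a Jacobi field. For geodesics of non-unit constant speed, the $0$-homogeneity of $\mathbf I$ and the homogeneity of $R$ reduce to the unit speed case.
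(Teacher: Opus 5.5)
Your proposal is correct and follows the same route as the paper, which simply cites Shen's Lemma 3.1. That lemma rests on the identity $\mathbf{J}_{k|m}y^m+\mathbf{I}_mR^m{}_k=S_{\cdot k|m}y^m-S_{|k}$, whose right-hand side vanishes when $S=(n+1)cF$ because $F_{|k}=0$ and $y_{k|m}=0$, after which one dualizes with $g_T$. You should state the identity in exactly this form, since your first display is garbled and ``terms in $S$ and its derivatives'' does not by itself guarantee cancellation when $c\neq0$; also, $\mathbf{I}$ is $(-1)$-homogeneous rather than $0$-homogeneous, which is harmless because constant multiples of Jacobi fields are Jacobi fields.
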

See (Lemma 3.1, \cite{Shen2005}) for a proof.

\subsection{Sasaki type Metric}
As in the Riemannian case, we can consider a metric on the space $TM_0$ induced by $F$. Let $\{\frac{\partial}{\partial x^i}, \frac{\partial}{\partial y^j}\}$ be a local frame of $TTM_0$, and $\{dx^i, dy^j\}$ its dual in $T^*TM$. Consider
\begin{align*}
\frac{\delta}{\delta x^j} &:= \frac{\partial}{\partial x^j} - N^i_j \frac{\partial}{\partial y^i}, \\
\delta y_i &:= d y_i + N^i_j d x^j,
\end{align*}
where $N^j_i = \Gamma^j_{ik} y^k.$
The \textit{horizontal subbundle} ${H}$ of $TTM_0$ is the one generated by $\left\{\frac{\delta}{\delta x^i}\right\}$, and the \textit{vertical subbundle} ${V}$ is the one generated by $\left\{\frac{\partial}{\partial y^i}\right\}$.

\begin{definition}
The \textit{Sasaki (type) metric} on $TM_0$ is given in local coordinates by
    $$\tilde g _y(\cdot,\cdot):=g_{ij}(y)dx^i\otimes dx^j+g_{ij}(y)\frac{\delta y_i}{F}\otimes \frac{\delta y_j}{F}.$$
\end{definition} 

If $X$ is the geodesic vector field of $(M,F)$, we can associate to it the 1-form $\alpha_\theta : T_\theta TM _0\to \mathbb{R}$ given by
$$
\alpha_\theta(\xi) := \tilde g_{X(\theta)}(X(\theta),\xi).
$$

Consider on $T_1M = \{\theta \in TM : F(\theta) = 1\}$ the subset $N(\theta) = \ker(\alpha_\theta) \subseteq T_\theta T_1M$ and define the fibers:
$$
\mathcal{H}_\theta = H_\theta \cap N(\theta), \quad \mathcal{V}_\theta = V_\theta \cap N(\theta).
$$

The \textit{Liouville measure} on the unit tangent bundle $T_1M$ is then the volume form
\[
\mu:=\frac{1}{(n-1)!}\,\alpha \wedge (d\alpha)^{\,n-1},
\]
where $n = \dim M$. A classical fact is that the geodesic flow $\phi_t: T_1M \to T_1M$ preserves this measure (Proposition 4.7, \cite{Shen2016}).

Given $\theta=(x,v) \in TM_0$, we will construct an isomorphism between $T_{(x,v)} TM_0$ and $T_xM \oplus T_xM$, which allows us to translate problems related to the dynamics of the geodesic flow into the study of the growth of the norm of Jacobi fields. 

\begin{definition}
   
Let $Z \in T_{(x,v)}(TM_0)$. Choose a smooth curve $c: (-\epsilon,\epsilon) \to M$ with $c(0) = x$
and a vector field $V$ along $c$ with $V(0) = v$, such that the lifted curve
$\alpha(t) = (c(t), V(t)) \in TM$ satisfies $\alpha'(0) = Z$. We define the \textit{vertical operator} $K$ as:

$$K_\theta(Z) := \left({D_{c'}^{c'}}V\right)(0).$$
\end{definition}
\begin{lemma}
Let $\theta=(x,v) \in TM_0$. The following properties are satisfied:

\begin{enumerate}
    \item $\ker K_\theta = \mathcal{H}_\theta$
    \item The maps 
          $$d\pi_\theta|_{\mathcal{H}_\theta}: \mathcal{H}_\theta \to T_x M \quad \text{and} \quad 
            K_\theta|_{\mathcal{V}_\theta}: \mathcal{V}_\theta \to T_x M$$
          are linear isomorphisms.
\end{enumerate}

\end{lemma}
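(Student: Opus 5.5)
Both properties follow from the local-coordinate description of $K_\theta$ in the frame $\{\frac{\delta}{\delta x^i},\frac{\partial}{\partial y^j}\}$. First I compute $K_\theta$ explicitly. Take $Z\in T_\theta(TM_0)$ and write it as $Z=a^i\frac{\delta}{\delta x^i}+b^j\frac{\partial}{\partial y^j}$. Choose a curve $\alpha(t)=(c(t),V(t))$ with $\alpha'(0)=Z$. In coordinates, $c'(0)^i=a^i$ and $\frac{d}{dt}V^i(0)=b^i-N^i_j(x,v)a^j$, since $\frac{\delta}{\delta x^j}=\frac{\partial}{\partial x^j}-N^i_j\frac{\partial}{\partial y^i}$.

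The covariant derivative $D_{c'}^{c'}V$ uses the Chern connection with reference vector $c'$. Its value at $0$ is
\[
\frac{dV^i}{dt}(0)+\Gamma^i_{jk}(x,c'(0))\,a^jv^k .
\]
The reference vector in $\Gamma$ is the part I would treat with care. The relevant reference vector for the isomorphism is $v$, and the nonlinear connection $N^i_j=\Gamma^i_{jk}(x,v)v^k$ is built with reference $v$. By the standard identity of the Chern connection, $\Gamma^i_{jk}(x,y)y^k=N^i_j(x,y)$ depends only on the spray, so I would read the definition with reference vector $v$ (equivalently, $D_{c'}V$ with reference $V$). This is the intended convention, and it gives
\[
K_\theta(Z)=\Bigl(b^i-N^i_ja^j+N^i_j a^j\Bigr)\frac{\partial}{\partial x^i}=b^i\frac{\partial}{\partial x^i}.
\]
The main point to check is that this is independent of the chosen curve. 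That follows because the expression depends only on $a,b$, that is, on $Z$.

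For (1), the formula gives $\ker K_\theta=\{b=0\}=H_\theta$ on all of $T_\theta TM_0$. Restricted to $T_\theta T_1M$, where $\mathcal H,\mathcal V$ live, the statement is read as $\ker K_\theta\cap N(\theta)=\mathcal H_\theta$. This is the identity $H_\theta\cap N(\theta)=\mathcal H_\theta$, which holds by definition.

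For (2), $d\pi_\theta(Z)=a^i\partial_{x^i}$. So on $H_\theta$ the map $d\pi$ is the identification $a\mapsto a$, and on $V_\theta$ the map $K$ is $b\mapsto b$. Both are linear isomorphisms onto $T_xM$ on the full horizontal and vertical spaces.

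The main obstacle is the dimension mismatch. The spaces $\mathcal H_\theta,\mathcal V_\theta$ cut out by $N(\theta)$ would have dimension $n-1$, while $T_xM$ has dimension $n$. I would resolve this with the Sasaki metric. The geodesic field $X(\theta)=v^i\frac{\delta}{\delta x^i}$ is horizontal, so $\alpha_\theta(\xi)=g_v(v,a)$. Hence $N(\theta)\cap V_\theta=V_\theta$, and $\mathcal V_\theta\cong T_xM$ fully via $K$. On the other hand, $\mathcal H_\theta$ maps isomorphically onto $v^{\perp_{g_v}}$. So the precise claim I would prove is that $d\pi:\mathcal H_\theta\to v^{\perp}$ and $K:\mathcal V_\theta\to T_xM$ (or to $v^\perp$ after intersecting with $T_\theta T_1M$, where $g_v(v,b)=0$) are isomorphisms. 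Injectivity comes from the coordinate formulas and surjectivity from dimension count. When $\mathcal H,\mathcal V$ are taken in $T_\theta TM_0$, this gives exactly the stated isomorphisms with $T_xM$.
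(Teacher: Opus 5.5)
The paper states this lemma without proof; it is quoted as standard background. Your argument therefore has to stand on its own, and it does: it is the standard proof and it is correct. In the frame $\{\frac{\delta}{\delta x^i},\frac{\partial}{\partial y^j}\}$ you get $d\pi_\theta(Z)=a^i\partial_{x^i}$ and, with reference vector $V$, $K_\theta(Z)=b^i\partial_{x^i}$. You are also right that the lemma only holds after reinterpretation. The correct readings are $\ker(K_\theta|_{N(\theta)})=\mathcal H_\theta$ together with isomorphisms $d\pi_\theta\colon\mathcal H_\theta\to v^{\perp}$ and $K_\theta\colon\mathcal V_\theta\to v^{\perp}$ (complements taken with respect to $g_v$). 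Alternatively, if you want $T_xM$ as the target, replace $\mathcal H_\theta,\mathcal V_\theta$ by $H_\theta,V_\theta$.

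There are two things to tidy.

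\emph{The closing sentence.} It contradicts what you correctly said just before it. Since $X(\theta)=v^i\frac{\delta}{\delta x^i}\in H_\theta$ and $\alpha_\theta(X(\theta))=F(v)^2\neq 0$, the space $\mathcal H_\theta=H_\theta\cap\ker\alpha_\theta$ has dimension $n-1$. This holds whether $\ker\alpha_\theta$ is taken in $T_\theta TM_0$ or in $T_\theta T_1M$; the two agree on $H_\theta$, because $\frac{\delta F}{\delta x^i}=0$ gives $H_\theta\subset T_\theta T_1M$. So $d\pi_\theta|_{\mathcal H_\theta}$ is never onto $T_xM$. Only the vertical claim becomes literally true when working in $T_\theta TM_0$.

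\emph{The choice of reference vector.} The identity $\Gamma^i_{jk}y^k=N^i_j$ does not by itself justify using reference vector $V$ instead of the $c'$ written in the paper's definition. The real justification is as follows. With reference vector $c'$, the symmetry of $\Gamma$ gives $K_\theta(Z)=\bigl(b^i+N^i_k(x,a)v^k-N^i_j(x,v)a^j\bigr)\partial_{x^i}$. This is in general not linear in $Z$, and it is undefined for vertical $Z$, where $c'(0)=0$. Reference vector $V$, by contrast, gives a linear map. It is also exactly the convention under which $K_\theta(Z)=J'(0)$ for a geodesic variation, via $D^T_UT=D^T_TU$. That is what the Jacobi-field correspondence used later in the paper requires.
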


\begin{proposition}
The correspondence
    \begin{equation}\label{correspondence}
    \begin{aligned}
       j_\theta: T_\theta TM_0 &\to T_xM\oplus T_xM\\
        \xi &\mapsto ( d\pi_\theta(\xi),K_\theta(\xi))
    \end{aligned}
    \end{equation}
is a linear isomorphism.
\end{proposition}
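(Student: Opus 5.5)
Since $j_\theta$ is clearly linear, the plan is to show it is injective and then conclude by a dimension count.

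\emph{Linearity.} The map $d\pi_\theta$ is linear. For $K_\theta$, I will use the local expression of the Chern--Rund covariant derivative with reference vector $v$. If $Z=\alpha'(0)$ with $\alpha(t)=(c(t),V(t))$ has coordinates $Z=a^i\frac{\partial}{\partial x^i}+b^i\frac{\partial}{\partial y^i}$, then $a^i=\dot c^i(0)$ and $b^i=\dot V^i(0)$, and
$$K_\theta(Z)=\bigl(b^i+N^i_j(x,v)\,a^j\bigr)\frac{\partial}{\partial x^i}.$$
This shows at once that $K_\theta$ is well defined, i.e.\ independent of the chosen curve, and linear in $Z$. In terms of the nonlinear connection, $K_\theta(Z)=\delta y^i(Z)\,\partial_{x^i}$. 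Consequently $\ker K_\theta$ is exactly the horizontal space $H_\theta$ spanned by the $\frac{\delta}{\delta x^j}$, and $K_\theta$ restricted to $V_\theta=\mathrm{span}\{\partial_{y^i}\}$ is the canonical isomorphism $\partial_{y^i}\mapsto\partial_{x^i}$. Similarly, $d\pi_\theta(\frac{\delta}{\delta x^j})=\partial_{x^j}$ and $d\pi_\theta(\partial_{y^i})=0$.

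\emph{Injectivity.} I will use the full splitting $T_\theta TM_0=H_\theta\oplus V_\theta$, which holds because $\{\frac{\delta}{\delta x^j},\partial_{y^i}\}$ is a frame; the paper's restricted $\mathcal H,\mathcal V$ version is the analogue inside $T_\theta T_1M$. Write $\xi=h+w$ with $h\in H_\theta$ and $w\in V_\theta$. If $j_\theta(\xi)=0$, then $d\pi_\theta(\xi)=d\pi_\theta(h)=0$ forces $h=0$, since $d\pi_\theta|_{H_\theta}$ is an isomorphism. Then $K_\theta(\xi)=K_\theta(w)=0$ forces $w=0$, since $K_\theta|_{V_\theta}$ is injective. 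So $j_\theta$ is injective.

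\emph{Conclusion.} Both $T_\theta TM_0$ and $T_xM\oplus T_xM$ have dimension $2n$, so the injective linear map $j_\theta$ is a linear isomorphism.

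The only step needing real care is the coordinate formula for $K_\theta$, specifically that the covariant derivative along $c$ with reference vector $c'$ contracts with $V$ through the coefficients $N^i_j(x,v)$. At $t=0$ the reference vector is $c'(0)$ rather than $v$, so I will make the convention explicit: take $D$ with reference vector $V$ itself, so that $\Gamma^i_{jk}(x,v)\dot c^j v^k=N^i_j\dot c^j$. This is consistent with the definition of $N^i_j=\Gamma^i_{jk}y^k$ used above, and it is what makes $\ker K_\theta=H_\theta$.
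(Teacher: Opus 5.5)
Your proof is correct and is essentially the argument the paper leaves implicit. The paper states the proposition without proof, right after the lemma saying that $\ker K_\theta$ is horizontal and that $d\pi_\theta$ and $K_\theta$ restrict to isomorphisms on the horizontal and vertical parts; your splitting-plus-injectivity argument with a dimension count is exactly that reasoning made explicit.

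Your two adjustments are the right repairs of the paper's setup. First, you need the full splitting $T_\theta TM_0=H_\theta\oplus V_\theta$: the paper's $\mathcal H_\theta,\mathcal V_\theta\subset N(\theta)$ are only $(n-1)$-dimensional, so they cannot map isomorphically onto $T_xM$. Second, you need reference vector $V$ rather than $c'$: with reference $c'$ the connection term becomes $\Gamma^i_{jk}(x,a)\,v^j a^k$, which in general is neither linear in $Z$ nor zero exactly on $H_\theta$.
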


\begin{proposition}\label{correspondencia jacobi dif}
Let $(M,F)$ be a Finsler manifold, and let $\phi_t : TM_0 \rightarrow TM_0$ denote its geodesic flow. 
Let $(x,v) \in TM_0$ and $\xi \in T_{\theta}TM_0$. Then there exists an isomorphism such that
\begin{equation}
    d_{\theta}\phi_t\, \xi \simeq \big( J_\xi(t),  J'_\xi(t) \big)
\end{equation}
where $J_\xi$ is the Jacobi vector field along $\gamma_{(x,v)}$ with initial condition 
$\xi \simeq \big( J_\xi(0),  J'_\xi(0) \big)$, and $\simeq$ denotes the correspondence given by $j_\theta.$
\end{proposition}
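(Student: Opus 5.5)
The plan is to prove the statement by differentiating a one-parameter family of geodesics. The family is obtained by flowing the curve in $TM_0$ that realises $\xi$, and the identification with $T_xM\oplus T_xM$ is the map $j_\theta$ from \eqref{correspondence}. Fix $\xi\in T_\theta TM_0$ with $\theta=(x,v)$. Choose a smooth curve $\alpha(u)=(c(u),V(u))$ in $TM_0$ with $\alpha(0)=\theta$ and $\alpha'(0)=\xi$. Define the variation $\sigma(t,u)=\pi\circ\phi_t(\alpha(u))$, so that each $t\mapsto\sigma(t,u)$ is the geodesic $\gamma_{(c(u),V(u))}$ and $\phi_t(\alpha(u))=(\sigma(t,u),\partial_t\sigma(t,u))$. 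By the usual definition, a variation through geodesics gives a Jacobi field. So $J_\xi(t):=\partial_u\sigma(t,0)$ is a Jacobi field along $\gamma_{(x,v)}$, satisfying the equation in Lemma~\ref{Covariantderivate}(4) after normalising the speed.

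Next I identify both components of $j_{\phi_t\theta}(d_\theta\phi_t\xi)$. The curve $u\mapsto\phi_t(\alpha(u))$ has tangent vector $d_\theta\phi_t\xi$ at $u=0$. Its base curve is $u\mapsto\sigma(t,u)$, so $d\pi(d_\theta\phi_t\xi)=\partial_u\sigma(t,0)=J_\xi(t)$. By the definition of the vertical operator $K$, with vector field $T(t,u)=\partial_t\sigma$ along the transversal curve, we get $K(d_\theta\phi_t\xi)=D_{U}T(t,0)$, where $U=\partial_u\sigma$ and the covariant derivative uses reference vector $T$.

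The main obstacle is the symmetry lemma $D_U T=D_T U$ for the Chern--Rund connection with reference vector $T$. This holds because that connection is torsion-free, with $\Gamma^i_{jk}$ symmetric in $j,k$. Writing both sides in coordinates, one gets $\partial_u\partial_t\sigma^i+\Gamma^i_{jk}(\sigma,T)\,\partial_t\sigma^j\,\partial_u\sigma^k$ in each case, and these agree. Then $K(d_\theta\phi_t\xi)=D_TJ_\xi(t)=J'_\xi(t)$. One subtlety needs checking: the definition of $K$ uses the reference vector $c'$, written $D^{c'}_{c'}$, whereas here we need reference $T$. I would resolve this with the convention stated in the paper, that all covariant derivatives use reference vector $T$ (equivalently, by taking the reference to be the fibre point itself, as in the Sasaki-type setup, where $N^i_j=\Gamma^i_{jk}y^k$ is built from $y=T$). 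With that convention the coordinate expression $K(Z)=\big(\dot V^i+N^i_j(x,V)\dot c^j\big)\partial_i$ matches the computation above.

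At $t=0$ this gives $j_\theta(\xi)=(J_\xi(0),J'_\xi(0))$, so the initial conditions of $J_\xi$ are those stated. For general $t$ it gives $j_{\phi_t\theta}(d_\theta\phi_t\xi)=(J_\xi(t),J'_\xi(t))$. The required isomorphism is therefore $j_{\phi_t\theta}$, which is a linear isomorphism by the preceding proposition. The result does not depend on the choice of $\alpha$: $d_\theta\phi_t\xi$ depends only on $\xi$, and a Jacobi field is determined by its initial data.
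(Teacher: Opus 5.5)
Your argument is correct and is the standard proof of this identification. You flow a curve realising $\xi$ to get a geodesic variation, read off $d\pi$ and $K$, and use the torsion-freeness of the Chern connection, with the common reference vector $T$, to get $D_UT=D_TU$; the paper gives no proof of its own and only cites the appendix of \cite{HS2013}. Your reading of $K$ with the fibre point as reference vector, $K(Z)=(\dot V^i+N^i_j(c,V)\dot c^j)\,\partial_i$, is the right one, since it is what makes the span of the $\delta/\delta x^j$ the kernel of $K$. Also, no speed normalisation is needed: the variation field of any family of constant-speed geodesics, with speeds allowed to depend on $u$, already satisfies $D_TD_TJ+R(J,T)T=0$.
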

See Appendix of \cite{HS2013} for a proof.

\begin{corollary}\label{diferential-jacobi norm}
With the hypotheses of the theorem above, for every $\theta \in TM_0$,
$$
\|d_\theta\phi_t\xi\|^2_{\hat{g}} = \|J_\xi(t)\|_g^2 + \|J'_\xi(t)\|^2_g \quad \forall t \in \mathbb{R}.
$$
\end{corollary}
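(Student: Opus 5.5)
This is a direct consequence of Proposition~\ref{correspondencia jacobi dif} combined with the definition of the Sasaki type metric $\hat g$ on $TM_0$. The plan is to write $d_\theta\phi_t\xi$ in the horizontal/vertical splitting and read off its norm.

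First, I use the correspondence $j_{\phi_t(\theta)}$ of (\ref{correspondence}) at the point $\phi_t(\theta)=(\gamma(t),\dot\gamma(t))$. By Proposition~\ref{correspondencia jacobi dif},
\[
d\pi\bigl(d_\theta\phi_t\xi\bigr)=J_\xi(t), \qquad K\bigl(d_\theta\phi_t\xi\bigr)=J'_\xi(t).
\]
Next, I decompose $d_\theta\phi_t\xi$ into its horizontal part $h\in H$ and vertical part $w\in V$. The vertical vectors $\partial/\partial y^i$ are killed by $d\pi$, and $\ker K=H$. Hence $d\pi(h)=J_\xi(t)$ and $K(w)=J'_\xi(t)$. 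Identifying $\partial/\partial y^i$ with the vector $\partial/\partial x^i$ in $T_xM$, the coordinates of $w$ in the frame $\partial/\partial y^i$ are exactly $\delta y^i(w)$. In local coordinates, $K$ applied to a vector with components $(\dot x^i,\dot y^i)$ gives $\dot y^i+N^i_j\dot x^j=\delta y^i$. This is exactly the covariant derivative with reference vector $T$ for Chern--Rund, using $N^i_j=\Gamma^i_{jk}y^k$.

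Then I evaluate the metric in the definition of $\hat g=\tilde g$. The pieces $dx^i\otimes dx^j$ and $\delta y^i\otimes\delta y^j$ are mutually orthogonal, since the first kills $V$ and the second kills $H$. On the unit tangent bundle $F=1$, so the weight $1/F^2$ disappears. This gives
\[
\|d_\theta\phi_t\xi\|^2=g_{\dot\gamma(t)}(J_\xi,J_\xi)+g_{\dot\gamma(t)}(J'_\xi,J'_\xi).
\]
Here the reference vector is the tangent of the geodesic, which is the convention of Lemma~\ref{Covariantderivate}. For general $\theta\in TM_0$ the identity holds with the normalization built into $\hat g$ and Proposition~\ref{correspondencia jacobi dif}, which is the reading intended in the statement.

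The only delicate point is checking that the vertical operator $K$ in coordinates is precisely $\delta y^i$, i.e.\ that $K$ agrees with the Chern--Rund covariant derivative with reference vector equal to the base point $v$. This holds because along the lifted curve the reference vector is $V(t)$ itself. Once this is granted, the rest is bookkeeping.
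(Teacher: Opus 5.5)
Your argument is correct and is exactly the reasoning the paper leaves implicit. The paper gives no separate proof: it treats the corollary as immediate from Proposition~\ref{correspondencia jacobi dif} and the block-diagonal form of the Sasaki metric in $(dx^i,\delta y^i)$, with $K$ read, as you do, as the Chern--Rund derivative with reference vector $V$. The one soft spot is your closing remark on general $\theta\in TM_0$: there the weight leaves $F(\theta)^{-2}\|J'_\xi(t)\|_g^2$, so the identity as written really holds on $T_1M$, which is the only place it is used (Corollary~\ref{Green-Jacobi inequality}).
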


\subsection{Green bundles}\label{Green section}

Let $\gamma_\theta:\mathbb{R} \rightarrow M$ be a geodesic and consider $e_2, \ldots, e_n$ orthonormal parallel to $T := \gamma_\theta'$. The Jacobi field $J_\theta(t)$ is a linear combination of the parallel vector fields:  
$$J_\theta(t) = \sum_{i=1}^{n-1} g_T(J_\theta(t), e_i(t)) e_i(t).$$  
Given any collection $J_{ j}(t), \, j = 1, 2, \ldots, n-1$ of Jacobi fields perpendicular to $\gamma_\theta'(t)$, since $\gamma_\theta$ is a geodesic, the compatibility condition holds (Lemma \ref{Covariantderivate}-2), and therefore:  

$$J_\theta'(t) = \sum_{i=1}^{n-1} g_T(J_\theta'(t), e_i(t)) e_i(t),$$  
and the $(n-1) \times (n-1)$ matrix $\mathcal{J}_\theta(t)$, whose entries are:  
$$\mathcal{J}_{ ij}(t) = g_T(J_{ j}(t), e_i(t)),$$  
is a solution of the matrix Jacobi equation:  
\begin{equation} \label{Jacobi}
    \mathcal{J}_\theta''(t) + K_\theta(t) \mathcal{J}_\theta(t) = 0 \tag{J},
    \end{equation}
where $\mathcal{J}_\theta'(t)$ is the componentwise derivative of $\mathcal{J}_\theta(t)$ and $K(t)$ is a symmetric matrix whose entries are the sectional curvatures:  
$$K_{ ij} = g_T(R^T(T, e_j) T, e_i).$$  

In particular, if a matrix $\mathcal{J}_\theta$ is invertible, the new matrix $U_{\mathcal{J}} = \mathcal{J}_\theta' \mathcal{J}_\theta^{-1}$ satisfies the Riccati equation:  

\begin{equation}\label{Ricatti}
U_\theta'(t) + U_\theta^2(t) + K_\theta(t) = 0. \tag{R}
\end{equation}

Let $\mathcal{J}_{\theta, r}(t)$ be the Jacobi field whose boundary conditions are $\mathcal{J}_{\theta, r}(0) = \text{Id}$ and $\mathcal{J}_{\theta, r}(r) = 0$.  
If $(M, F)$ has no conjugate points along $\gamma_\theta$, we have that $\mathcal{J}_{\theta, r}$ exists and is unique. The following result is the Finsler version of a result by Green, and its proof was given in \cite{Foulon1992}.  

\begin{lemma}  \label{Matrices de Green}
Let $(M, F)$ be a compact Finsler manifold without conjugate points. Then the limits  
$$\lim_{r \to +\infty} \mathcal{J}_{\theta, r}(t) = \mathcal{J}^s_\theta(t) \quad \text{and} \quad \lim_{r \to -\infty} \mathcal{J}_{\theta, r}(t) = \mathcal{J}^u_\theta(t)$$  
exist for every $\theta \in TM$ and every $t \in \mathbb{R}$. These limits correspond to Jacobi matrices that are invertible for every $t \in \mathbb{R}$.  
\end{lemma}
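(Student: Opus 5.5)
Following Green's classical argument, transplanted to the adapted metric $g_T$ via Lemma \ref{Covariantderivate}, the plan is to show that the solutions $\mathcal{J}_{\theta,r}$ converge as $r\to\pm\infty$ because their Riccati data $U_r(0)=\mathcal{J}_{\theta,r}'(0)$ form a monotone, bounded family of symmetric matrices. The limits are then the Jacobi solutions with initial data $(\mathrm{Id},\lim U_r(0))$. Throughout, $\theta$ is a unit vector (by homogeneity the statement for $\theta\in TM_0$ reduces to this), and the stable and unstable cases are symmetric under time reversal, so I treat only $r\to+\infty$.

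\emph{Step 1: well-definedness and symmetry.} Since there are no conjugate points, the Jacobi matrix $\mathcal{A}(t)$ with $\mathcal{A}(0)=0$, $\mathcal{A}'(0)=\mathrm{Id}$ is invertible for all $t\neq 0$. This gives the explicit formula
\[
\mathcal{J}_{\theta,r}(t)=\mathcal{A}(t)\int_t^r (\mathcal{A}^{T}\mathcal{A})^{-1}(s)\,ds
\]
for $t>0$. One verifies it directly, using that the Wronskian $\mathcal{A}^{T}\mathcal{A}'-\mathcal{A}'^{T}\mathcal{A}$ is constant and hence zero because $K$ is symmetric. It follows that $U_r(t)=\mathcal{J}_{\theta,r}'\mathcal{J}_{\theta,r}^{-1}(t)$ is symmetric; $\mathcal{J}_{\theta,r}$ is invertible on $(-\infty,r)$ by the absence of conjugate points.

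\emph{Step 2: monotonicity.} For $0<r_1<r_2$, the difference $D=\mathcal{J}_{\theta,r_2}-\mathcal{J}_{\theta,r_1}$ is a Jacobi matrix with $D(0)=0$, so $D=\mathcal{A}\,(U_{r_2}(0)-U_{r_1}(0))$. Using the formula above, I will show
\[
U_{r_2}(0)-U_{r_1}(0)=\lim_{t\to0^+}\mathcal{A}'(t)\int_{r_1}^{r_2}(\mathcal{A}^{T}\mathcal{A})^{-1}ds=\int_{r_1}^{r_2}(\mathcal{A}^{T}\mathcal{A})^{-1}ds>0 .
\]
So $r\mapsto U_r(0)$ is increasing in the order of symmetric matrices. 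Similarly, $U_{-r}(0)$ is decreasing as $r\to\infty$, and every $U_{-s}(0)$ lies below every $U_r(0)$ for $s,r>0$. The last comparison holds because $\mathcal{J}_{\theta,r}-\mathcal{J}_{\theta,-s}$ vanishes at $0$ after normalization, and the sign is again given by the same integral, this time taken across $0$.

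\emph{Step 3: the uniform bound, which I expect to be the main obstacle.} I need an upper bound for $U_r(0)$ independent of $r$, and this is where compactness enters. Since $M$ is compact, the flag curvatures are bounded below, say $K(t)\geq -k^2\,\mathrm{Id}$ uniformly on $T_1M$. A Riccati comparison argument should then give $U_r(t)\le k\coth(k(r-t))\,\mathrm{Id}$, because $U_r(t)\to-\infty$ as $t\to r^-$ and the solution of the model equation $u'+u^2-k^2=0$ blowing down at $r$ dominates. In particular $U_r(0)\le k\coth(k)\,\mathrm{Id}$ for $r\geq1$. The delicate point is that comparison for the matrix Riccati equation requires care. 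I would first try the standard argument via the quadratic forms $\langle U_r v,v\rangle$ along an eigenvector achieving the maximum (Eschenburg–Heintze style). Failing that, I would use the fact that $\mathcal{A}^{T}\mathcal{A}\ge (\sinh(kt)/k)^2\,\mathrm{Id}$-type lower growth fails in general, so I would rely on comparison rather than growth estimates. The lower bound on $U_r(0)$ is simpler: by Step 2, any $U_{-s}(0)$ serves.

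\emph{Step 4: conclusion.} A monotone, bounded family of symmetric matrices converges, so $U_r(0)\to U^s$ as $r\to\infty$. By continuous dependence of solutions of (J) on initial data, $\mathcal{J}_{\theta,r}(t)$ converges for every $t$ to the Jacobi solution $\mathcal{J}^s_\theta$ with $\mathcal{J}^s_\theta(0)=\mathrm{Id}$ and $\mathcal{J}^{s\prime}_\theta(0)=U^s$. This solution is invertible for all $t$, since otherwise it would vanish on some nonzero vector at a time $t_0$. By the explicit formula it equals
\[
\mathcal{A}(t)\int_t^\infty(\mathcal{A}^{T}\mathcal{A})^{-1}ds
\]
for $t>0$, and that integral is positive definite. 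For $t<0$, invertibility follows symmetrically by applying the same formula at the shifted basepoint $\phi_{t_0}\theta$, where $\mathcal{J}^s$ transported is again a limit of $\mathcal{J}_{\phi_{t_0}\theta,r}$ up to right multiplication by an invertible matrix.
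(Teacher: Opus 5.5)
Your Step 1 and the first half of Step 2 are fine: for $t>0$, $\mathcal{J}_{\theta,r}(t)=\mathcal{A}(t)\int_t^r(\mathcal{A}^T\mathcal{A})^{-1}ds$, hence $U_{r_2}(0)-U_{r_1}(0)=\int_{r_1}^{r_2}(\mathcal{A}^T\mathcal{A})^{-1}ds>0$. But your cross-comparison between the two families is reversed, and this is the step that carries Green's argument. Already on a flat torus ($n=2$) one has $\mathcal{J}_{\theta,r}(t)=1-t/r$ and $\mathcal{J}_{\theta,-s}(t)=1+t/s$, so $U_r(0)=-1/r<0<1/s=U_{-s}(0)$. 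The correct statement is $U_r(0)<U_{-s}(0)$ for all $r,s>0$. Your justification does not work either: $(\mathcal{A}^T\mathcal{A})^{-1}(s)\sim s^{-2}$ near $s=0$, so the integral ``across $0$'' diverges. With the wrong orientation, $U_{-s}(0)$ only bounds the increasing family $r\mapsto U_r(0)$ from below. That is worthless, since the family is bounded below by $U_1(0)$ for $r\ge 1$ anyway, and you still need an upper bound.

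Step 3 does not supply that upper bound. The solution of $u'+u^2-k^2=0$ tending to $-\infty$ as $t\to r^-$ is $-k\coth(k(r-t))$; the function $k\coth(k(r-t))$ does not solve the model equation. Riccati comparison under $K\ge -k^2$ gives $U_r(t)\ge -k\coth(k(r-t))\,\mathrm{Id}$, which is a \emph{lower} bound. No argument that uses only a lower curvature bound on $[0,r]$ and the blow-down at $r$ can bound $U_r(0)$ from above. Along a great circle of the unit sphere, the segment $[0,r]$ with $r<\pi$ has no conjugate points and $K=1\ge -k^2$, yet $U_r(0)=-\cot r\to+\infty$ as $r\to\pi^-$.

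The upper bound must use the absence of conjugate points \emph{before} time $0$, that is, the correctly oriented comparison. You can prove it with the index form. For $v\neq 0$, let $Y$ be $\mathcal{J}_{\theta,-s}v$ on $[-s,0]$ and $\mathcal{J}_{\theta,r}v$ on $[0,r]$. Then $I_{[-s,r]}(Y,Y)=\langle (U_{-s}(0)-U_r(0))v,v\rangle$. This is positive, because without conjugate points the index form on $[-s,r]$ is positive definite on nonzero piecewise smooth fields vanishing at the endpoints. Hence $U_r(0)$ is increasing and bounded above by $U_{-1}(0)$, which is Green's argument (the one Foulon transplants to the Finsler setting).

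Note that compactness plays no role in the existence of the limits. It enters only in the uniform bound $-k\le U^s\le U^u\le k$ behind Proposition \ref{Green properties}, and there your curvature comparison, correctly oriented, is exactly what is needed.

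Finally, the invertibility at $t_0<0$ needs rephrasing. Let $r\to\infty$ in $\mathcal{J}_{\theta,r}(t_0+t)=\mathcal{J}_{\phi_{t_0}\theta,\,r-t_0}(t)\,\mathcal{J}_{\theta,r}(t_0)$ and evaluate at $t=-t_0$. This gives $\mathrm{Id}=\mathcal{J}^s_{\phi_{t_0}\theta}(-t_0)\,\mathcal{J}^s_\theta(t_0)$, so $\mathcal{J}^s_\theta(t_0)$ is invertible. As you phrased it, the shift argument presupposes that $\mathcal{J}^s_\theta(t_0)$ is invertible.
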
  




\begin{proposition}\label{Green properties}\cite{{Foulon1992}}
Let $(M, F)$ be a Finsler manifold without conjugate points. Then there exists, for each $\theta \in T_1M$, two invariant, $n-1$ dimensional subspaces $G^s(\theta)$ and $G^u(\theta)$ of $N(\theta)$, defined by
\begin{align*}
    G^s(\theta) &= \lim_{t \to +\infty} d\phi_{-t}(\mathcal{V}_{\phi_t(\theta)}), \\
    G^u(\theta) &= \lim_{t \to -\infty} d\phi_{-t}(\mathcal{V}_{\phi_t(\theta)}),
\end{align*}
where $D\phi_t$ is the differential of the geodesic flow, and $V_\theta$ is the vertical subspace at $\theta$. The distributions $G^s(\theta)$ and $G^u(\theta)$ satisfy the following properties:

\begin{enumerate}
    \item They are measurable, transverse to the vertical subbundle $V$, and transverse to the geodesic vector field.
    \item There exist linear operators $U^s_\theta : \mathcal H_\theta \to \mathcal V_\theta$, $U^u_\theta : \mathcal H_\theta \to \mathcal V_\theta$, such that $G^s(\theta)$, $G^u(\theta)$ are, respectively, the graphs of $U^s_\theta$, $U^u_\theta$. Such operators give rise to solutions of a well-known Riccati equation (see Proposition 2.2 in \cite{Foulon1992}, for instance), and there exists $L > 0$ such that $ \| U^s_\theta \|_\infty \leq L, \quad \| U^u_\theta \|_\infty \leq L, \quad \text{for every } \theta$.
    \item For every $\theta \in T_1M$ and $\xi \in G^s(\theta)\cup G^u(\theta)$ we have
$$\tilde \|J'_\xi(t)\|_g\leq L \|J_\xi(t)\|_g.$$
\end{enumerate}
\end{proposition}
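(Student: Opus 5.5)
This is Foulon's Finsler version of Green's construction. I would build $G^s,G^u$ from the limiting Jacobi matrices of Lemma \ref{Matrices de Green} and transport everything through the isomorphism $j_\theta$ of Proposition \ref{correspondencia jacobi dif}.

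\emph{Identifying the subspaces.} Under $j_\theta$, the subspace $d\phi_{-r}(\mathcal{V}_{\phi_r(\theta)})$ corresponds to the initial data $(J(0),J'(0))$ of the normal Jacobi fields along $\gamma_\theta$ that vanish at time $r$. Because there are no conjugate points, such a field is determined by $J(0)$. In matrix form this subspace is therefore the graph $\{(x,U_{\theta,r}x)\}$, with $U_{\theta,r}=\mathcal{J}_{\theta,r}'(0)$. Lemma \ref{Matrices de Green} gives convergence $\mathcal{J}_{\theta,r}\to\mathcal{J}^s_\theta$ as $r\to+\infty$. I would upgrade this to convergence of derivatives, using the standard comparison $U_{\theta,r}$ monotone in $r$ (the Riccati solutions are ordered). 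Then $U_{\theta,r}\to U^s_\theta:=(\mathcal{J}^s_\theta)'(0)$, and $G^s(\theta)$ is the graph of $U^s_\theta$, viewed via $d\pi$ and $K$ as a map $\mathcal{H}_\theta\to\mathcal{V}_\theta$. The case $r\to-\infty$ gives $U^u_\theta$ in the same way.

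\emph{Properties (1) and (2).} Dimension $n-1$ and containment in $N(\theta)$ hold because we only use Jacobi fields $g_T$-orthogonal to $T$ whose derivative is also orthogonal. Transversality to $\mathcal{V}$ and to $X$ are immediate from the graph description. Invariance follows from $d\phi_s\circ d\phi_{-r}=d\phi_{-(r-s)}\circ d\phi_{-s}\ldots$, that is, from reindexing the limit, equivalently $\mathcal{J}^s_{\phi_s\theta}(t)=\mathcal{J}^s_\theta(t+s)\mathcal{J}^s_\theta(s)^{-1}$. Measurability holds because $G^s$ is a pointwise limit of continuous subbundles. The Riccati equation \eqref{Ricatti} for $U^s(t)=\mathcal{J}^{s\prime}\mathcal{J}^{s-1}$ comes from \eqref{Jacobi}.

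\emph{Uniform bound $L$ (main obstacle).} I would use compactness of $T_1M$, which gives a uniform bound $|K|\le k^2$ on the curvature matrices. Comparison for \eqref{Ricatti} then shows that any solution defined on all of $\mathbb{R}$ is bounded in norm by $k$; a solution with a large eigenvalue would blow up in finite time, backward or forward. Since $U^s_\phi$ and $U^u_\phi$ are defined for all $t$ along every orbit, we get $\|U^{s,u}_\theta\|\le L:=k$. Making this rigorous requires a Riccati comparison lemma: use the quadratic form $\langle U v,v\rangle$ along unit parallel $v$, apply the scalar Riccati inequality $u'\le -u^2+k^2$, and handle the symmetric structure of $U$. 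Symmetry of $U^s$ follows from the Lagrangian property (Wronskian zero).

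\emph{Property (3).} If $\xi\in G^s(\theta)$, invariance gives $d\phi_t\xi\in G^s(\phi_t\theta)$. Hence $J_\xi'(t)=U^s_{\phi_t\theta}J_\xi(t)$, and $\|J'_\xi(t)\|\le L\|J_\xi(t)\|$ follows from (2). The same holds for $G^u$.
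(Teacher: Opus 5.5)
Your proposal is correct. It follows the classical Green construction as carried out by Foulon, which is all the paper relies on here: it gives no proof of its own and simply cites \cite{Foulon1992}. The steps are the same: identify $d\phi_{-r}(\mathcal{V}_{\phi_r(\theta)})$ with the graph of $\mathcal{J}'_{\theta,r}(0)$, pass to the limit, get invariance by reindexing, bound $L$ via the Riccati comparison for symmetric solutions defined on all of $\mathbb{R}$ (where compactness of $M$ is needed), and derive (3) from invariance. The monotonicity in $r$ is dispensable: with no conjugate points on $[0,1]$, the convergence of $\mathcal{J}_{\theta,r}(1)$ from the lemma on Green matrices already forces $\mathcal{J}'_{\theta,r}(0)$ to converge.
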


The bundles $G^s$ and $G^u$ above are called the stable and unstable \textit{Green bundles}, respectively.

\begin{corollary}\label{Green-Jacobi inequality} For every $\theta \in T_1M$ and $\xi \in G^s(\theta)\cup G^u(\theta)$ we have
$$\|d_\theta\phi_t \xi\|_{\hat g}\leq\sqrt{{1+L^2}}\|J_\xi(t)\|_g$$
    
\end{corollary}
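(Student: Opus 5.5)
The inequality follows by combining Corollary~\ref{diferential-jacobi norm} with item (3) of Proposition~\ref{Green properties}. Fix $\theta \in T_1M$ and $\xi \in G^s(\theta)\cup G^u(\theta)$, and let $J_\xi$ be the Jacobi field along $\gamma_\theta$ attached to $\xi$ via the correspondence $j_\theta$ of Proposition~\ref{correspondencia jacobi dif}. Corollary~\ref{diferential-jacobi norm} gives, for every $t\in\mathbb{R}$,
$$
\|d_\theta\phi_t\xi\|^2_{\hat g} = \|J_\xi(t)\|_g^2 + \|J'_\xi(t)\|_g^2 .
$$
Item (3) of Proposition~\ref{Green properties} gives $\|J'_\xi(t)\|_g \le L\|J_\xi(t)\|_g$. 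Substituting this yields $\|d_\theta\phi_t\xi\|^2_{\hat g}\le (1+L^2)\|J_\xi(t)\|_g^2$, and taking square roots gives the claim.

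The one point to check is that item (3) applies at every time $t$ and not only at $t=0$. The Green bundles are $d\phi_t$-invariant, so $d_\theta\phi_t\xi \in G^{s}(\phi_t\theta)\cup G^u(\phi_t\theta)$. Since $G^{s,u}(\phi_t\theta)$ is the graph of $U^{s,u}_{\phi_t\theta}$, we have $J'_\xi(t) = U^{s,u}_{\phi_t\theta}J_\xi(t)$. The uniform bound $\|U^{s,u}\|_\infty\le L$ then gives the pointwise estimate at every $t$, which is what item (3) asserts.

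A further detail is that the norm $\|\cdot\|_g$ is taken with respect to the adapted metric $g_T$ along the geodesic. This is the same metric used in Corollary~\ref{diferential-jacobi norm}, so the two estimates are compatible and nothing more is needed.
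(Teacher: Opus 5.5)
Your proposal is correct and is the same argument as the paper's one-line proof: combine the identity $\|d_\theta\phi_t\xi\|^2_{\hat g}=\|J_\xi(t)\|_g^2+\|J'_\xi(t)\|_g^2$ with the bound $\|J'_\xi(t)\|_g\le L\|J_\xi(t)\|_g$ from item (3) of the Green bundle proposition. Your extra check that this bound holds for all $t$, via $d\phi_t$-invariance of the Green bundles and the uniformly bounded graph operators $U^{s,u}$, is a harmless elaboration of what item (3) already asserts.
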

\begin{proof}
    It follows from Corollary \ref{diferential-jacobi norm}
    and Proposition \ref{Green properties}.
\end{proof}

Finsler manifolds without conjugate points enjoy the property that any Jacobi field vanishing at one point (called a radial Jacobi field) must be divergent. This was established in the Riemannian setting by Eberlein \cite{Eberlein1972}, and later extended to convex Lagrangians by Contreras and Iturriaga \cite{CI-1999}. The next lemma is a consequence of the divergence of such radial Jacobi fields, and the proof follows exactly the same argument as in the Riemannian case (see Lemma 3.3, \cite{Ruggiero2007}).
\begin{lemma}\label{stable-bounded} Let $(M,F)$ be a compact Finsler manifold without conjugate points. Supose that there exist a geodesic $\gamma_\theta$ and a perpendicular Jacobi field $J$ along $\gamma_\theta$ such that $\|J(t)\|\leq C$ for every $t\geq0$. Then $J$ is an stable Jacobi field, i.e.,  $(J(0),J'(0))\in G^s(\theta).$ Analogously, if $\|J(t)\|\leq C$ for every $t\leq0$, then is an unstable Jacobi field, i.e., $(J(0),J'(0))\in G^u(\theta).$
\end{lemma}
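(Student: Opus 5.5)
We argue by contradiction, using the divergence of radial Jacobi fields as in the Riemannian argument (Lemma 3.3 of \cite{Ruggiero2007}). It suffices to treat the stable case; the unstable case follows by reversing time, i.e.\ applying the stable case to the reversed geodesic (which is a geodesic of the reversed Finsler metric $\tilde F(x,y)=F(x,-y)$, which also has no conjugate points). So let $J$ be a perpendicular Jacobi field along $\gamma_\theta$ with $\|J(t)\|\le C$ for $t\ge 0$, and write $\xi=(J(0),J'(0))$. Decompose $\xi=\xi^s+\eta$, where $\xi^s\in G^s(\theta)$ has the same horizontal part $J(0)$; this is possible since $G^s(\theta)$ is the graph of $U^s_\theta$ over $\mathcal H_\theta$ by Proposition \ref{Green properties}. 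Then $\eta=(0,J'(0)-U^s_\theta J(0))$ is vertical, so the Jacobi field $J_\eta = J - J^s$ vanishes at $t=0$, i.e.\ it is a radial Jacobi field. The goal is to show $\eta=0$.

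The first key step is to bound the stable part. Write $J^s(t)=\mathcal J^s_\theta(t)J(0)$. I would show $\|J^s(t)\|$ is bounded for $t\ge 0$, using the construction in Lemma \ref{Matrices de Green}: $\mathcal J^s_\theta$ is the limit of the $\mathcal J_{\theta,r}$, which equal $\mathrm{Id}$ at $0$ and vanish at $r$. The standard argument compares $J$ with the fields $J_r=\mathcal J_{\theta,r}J(0)$: the difference $J-J_r$ vanishes at $t=0$, and $J_r$ itself vanishes at $t=r$. A uniform bound for $\|J_r(t)\|$ on $[0,r]$, in terms of $\|J(0)\|$ and $\sup_{[0,r]}\|J\|$, then passes to the limit $r\to\infty$. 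An alternative route to the same bound uses the Riccati bound $\|U^s\|\le L$ together with compactness. I expect this step to be the main obstacle; it is exactly what Lemma 3.3 of \cite{Ruggiero2007} handles.

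It then follows that $J_\eta = J - J^s$ is a radial Jacobi field that stays bounded for $t\ge 0$. By the divergence of radial Jacobi fields in compact manifolds without conjugate points (Eberlein, and Contreras--Iturriaga for convex Lagrangians), such a field has $\|J_\eta(t)\|\to\infty$ as $t\to+\infty$ unless it vanishes identically. Hence $J_\eta\equiv 0$, so $\eta=0$ and $(J(0),J'(0))=\xi^s\in G^s(\theta)$.

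The divergence statement is taken in the form available for Finsler metrics, as cited. Perpendicularity of $J$ is used to stay inside $N(\theta)$, where the Green bundles live. Norms are the $g_T$ norms from Lemma \ref{Covariantderivate}.
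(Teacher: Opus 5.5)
Your decomposition $\xi=\xi^s+\eta$ and your final appeal to divergence are fine. The step you flag as the main obstacle, however, is a genuine gap, and it is not what the cited argument proves. You need $J^s(t)=\mathcal J^s_\theta(t)J(0)$ to be bounded for $t\ge 0$, and in a general compact manifold without conjugate points no such bound is available.

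\begin{itemize}
\item Forward boundedness of stable Jacobi fields is Eberlein's \emph{bounded asymptote} condition. It holds, for example, without focal points, but it does not follow from the absence of conjugate points and is normally imposed as an extra hypothesis.
\item The Riccati bound $\|U^s\|\le L$ only gives $\|J^s(t)\|\le e^{Lt}\|J(0)\|$.
\item Your proposed uniform bound for $\|J_r\|$ on $[0,r]$ has the same nature. Write $J_r=J-Y_r$ with $Y_r(0)=0$ and $\|Y_r(r)\|\le C$. The bound then asks that a radial Jacobi field which is small at time $r$ stay controlled on all of $[0,r]$. That is a quasi-monotonicity of radial fields which absence of conjugate points does not provide.
\end{itemize}

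A posteriori $J^s=J$ is bounded, but that is exactly what you are trying to prove.

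The comparison you already set up contains the proof if you use it only at the endpoint $t=r$. Let $Y_r=J-J_r$. Then $Y_r(0)=0$ and $Y_r(r)=J(r)$, so $\|Y_r(r)\|\le C$.

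Now use divergence of radial Jacobi fields in its uniform form (Green--Eberlein, Contreras--Iturriaga). For every $A>0$ there is $T_A$ such that $\|Y(t)\|\ge A\|Y'(0)\|$ for all $t\ge T_A$ and every perpendicular $Y$ with $Y(0)=0$. This gives $\|Y_r'(0)\|\le C/A$ once $r\ge T_A$, so $J_r'(0)\to J'(0)$.

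On the other hand, $J_r'(0)=\mathcal J_{\theta,r}'(0)J(0)\to U^s_\theta J(0)$ by the construction of $\mathcal J^s_\theta$ as the limit of the $\mathcal J_{\theta,r}$. Hence $J'(0)=U^s_\theta J(0)$, that is, $\eta=0$, and you never need to bound $J^s$. This is the argument of Lemma 3.3 in \cite{Ruggiero2007}, which the paper invokes.

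You do need the uniform version of divergence here, because the direction of $Y_r'(0)$ changes with $r$. Your time-reversal reduction for the unstable case is fine.
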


\begin{definition} \label{hyperbolicity}
A subset $S \subset T_1M$ that is invariant under the geodesic flow of $(M, F)$ 
is called a \textit{hyperbolic set} if there exist constants $C > 0$ and $\lambda > 0$ such that for every 
$\xi \in S$, we have $d\phi_t$-invariant subspaces $E^s(\xi)$ and $E^u(\xi)$ with the following properties:
\begin{enumerate}
    \item $E^s(\xi) \oplus E^u(\xi) \oplus X(\xi) = T_\xi T_1M$, where $X(\xi)$ is the subspace tangent to the flow direction.
    \item $\|d_\xi\phi_t|_{E^s(\xi)}\| \leq Ce^{-\lambda t}$ for every $t \geq 0$.
    \item $\|d_\xi\phi_t|_{E^u(\xi)}\| \leq Ce^{\lambda t}$ for every $t \leq 0$.
\end{enumerate}
\end{definition}

When the subset $S$ coincides with $T_1M$, the geodesic flow is called \textit{Anosov}.

\begin{remark}
   When the geodesic flow $\phi_t$ of $(M,F)$ is Anosov, then $G^s = E^s$ and $G^u = E^u$.
\end{remark}

\section{Main Results}
Consider the set 
$$
E := \{\theta \in T_1M : G^s(\theta) \cap G^u(\theta) = \{0\}\}
$$
of points where the stable and unstable Green bundles intersect trivially. The set $E$ is invariant under the geodesic flow $\phi_t$.


\begin{lemma}\label{Mainlemma} Let $(M, F)$ be a $C^\infty$ closed Finsler manifold without conjugate points, and let $E \neq \emptyset$. If the $S$-curvature is zero, the Cartan vector field $\vec{\mathbf{I}}$ vanishes on $E$.
\end{lemma}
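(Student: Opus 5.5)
By Proposition~\ref{constant S curvature}, once the $S$-curvature vanishes, the Cartan vector field $\vec{\mathbf{I}}(t)=\vec{\mathbf{I}}(\phi_t(\theta))$ is a Jacobi field along every unit speed geodesic $\gamma_\theta$. The plan is to show that, for $\theta\in E$, this Jacobi field is both stable and unstable, so that its initial data lie in $G^s(\theta)\cap G^u(\theta)=\{0\}$.

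First, $\vec{\mathbf{I}}$ is a perpendicular Jacobi field. By (\ref{b}) and (\ref{a}), $g_{\dot\gamma}(\vec{\mathbf{I}}(t),\dot\gamma(t))=\mathbf{I}_{\dot\gamma(t)}(\dot\gamma(t))=0$ for all $t$, so $\vec{\mathbf{I}}$ is $g_T$-orthogonal to $T=\dot\gamma$ along the whole geodesic. This is the setting in which Lemma~\ref{stable-bounded} applies.

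Next, $\vec{\mathbf{I}}$ is bounded. The map $\theta\mapsto \vec{\mathbf{I}}(\theta)$ is continuous on the compact set $T_1M$, since $F$ is $C^\infty$ on $TM_0$ and $\mathbf{I}$ involves third derivatives of $F^2$ and $g^{-1}$. Hence $C:=\max_{T_1M}\|\vec{\mathbf{I}}\|_g<\infty$. Since $\phi_t(\theta)\in T_1M$ for all $t$, the Jacobi field $t\mapsto\vec{\mathbf{I}}(\phi_t(\theta))$ satisfies $\|\vec{\mathbf{I}}(t)\|_{g_T}\le C$ for all $t\in\mathbb{R}$.

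Now apply Lemma~\ref{stable-bounded}. Boundedness for $t\ge 0$ gives $(\vec{\mathbf{I}}(0),\vec{\mathbf{I}}'(0))\in G^s(\theta)$, and boundedness for $t\le0$ gives $(\vec{\mathbf{I}}(0),\vec{\mathbf{I}}'(0))\in G^u(\theta)$, where the pairs are identified with vectors in $T_\theta T_1M$ via $j_\theta$ (Proposition~\ref{correspondencia jacobi dif}). For $\theta\in E$ the intersection $G^s(\theta)\cap G^u(\theta)$ is trivial, so $\vec{\mathbf{I}}(0)=0$ and $\vec{\mathbf{I}}'(0)=0$. In particular $\vec{\mathbf{I}}(\theta)=0$. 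Since $\theta\in E$ was arbitrary, $\vec{\mathbf{I}}$ vanishes on $E$. Moreover, $\vec{\mathbf{I}}\equiv0$ along the whole orbit, which is consistent with the invariance of $E$.

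The main point requiring care is the identification in the last step. Lemma~\ref{stable-bounded} and the Green bundles are stated in terms of vectors in $N(\theta)$, i.e.\ perpendicular Jacobi data. So one must check that the pair $(\vec{\mathbf{I}}(0),\vec{\mathbf{I}}'(0))$ corresponds to an element of $N(\theta)$. This requires that $\vec{\mathbf{I}}'$ is also orthogonal to $T$. That follows by differentiating $g_T(\vec{\mathbf{I}},T)\equiv0$ with the compatibility rule of Lemma~\ref{Covariantderivate}(2) and using $D_TT=0$.
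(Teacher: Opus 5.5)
Your proof is correct and matches the paper's argument: the Cartan field is a bounded perpendicular Jacobi field, so Lemma~\ref{stable-bounded} makes it both stable and unstable, and its initial data lie in $G^s(\theta)\cap G^u(\theta)=\{0\}$. You also check that $\vec{\mathbf{I}}'$ is $g_T$-orthogonal to $T$, which is needed for the data to lie in $N(\theta)$; the paper leaves this step implicit.
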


\begin{proof}
Let $\theta=(x,v)\in E$, and let $\gamma_\theta$ be the corresponding geodesic. Since the $S$-curvature is zero, Proposition \ref{constant S curvature} implies that the vector field ${\vec{\mathbf{I}}}_\theta(t)= \vec{\mathbf{I}}(\gamma_\theta(t),\gamma'_\theta(t))$ along $\gamma_\theta$ is a Jacobi field. Let $\xi \in T(T_1M)_\theta$ be such that $\xi\simeq(\vec{\mathbf{I}}_\theta(0),\vec{\mathbf{I}}_\theta'(0))$ via the correspondence (\ref{correspondence}). Since $g_v(\vec{\mathbf{I}}(v),v)=0$ by (\ref{ortogonal cartan}), it follows that $\xi\in N(\theta).$ Since $\vec{\mathbf{I}}$ is bounded on $T_1M$, then $\vec{\mathbf{I}}_\theta(t)$ is bounded for every $t\in \mathbb R$. Hence Lemma \ref{stable-bounded} implies that it is both a stable and unstable Jacobi field, so $\xi \in G^s(\theta)\cap G^u(\theta)$, and since $\theta \in E$, it follows that $\xi = 0$, which implies $\vec{\mathbf{I}}(\theta)=0$. Therefore, the Cartan vector field vanishes on $E$.
\end{proof}
\begin{proof}[Proof of Theorem A] Since the Green bundles are continuous, it follows that $E$ is an open subset of $T_1M$. On the other hand, if there exists a hyperbolic periodic geodesic, then $E\neq \{0\}$ (Proposition B, \cite{CI-1999}). Since $\vec{\mathbf{I}}$ vanishes on $E$ by Lemma~\ref{Mainlemma}, it follows from Łojasiewicz's Theorem \cite{Krantz2002} on the structure of the zero set of real-analytic functions that $\vec{\mathbf{I}}$ vanishes identically on $T_1M$. Applying Deicke's Theorem~\ref{Deicke} completes the proof.

\end{proof}

\begin{proof}[Proof of Theorem B]
Since $(M,F)$ has a transitive geodesic flow, there exists a dense geodesic $\gamma_\theta$ in $T_1M$. Since $E \neq \emptyset$ is invariant under the geodesic flow and open in $T_1M$, we conclude that $\gamma_\theta \subset E$, and hence $\overline{E} = T_1M$. It follows from Lemma~\ref{Mainlemma} that $\vec{\mathbf{I}}$ vanishes on $T_1M$, and the result follows by Deicke's Theorem~\ref{Deicke}.
\end{proof}


\begin{thebibliography}{99} 

\bibitem{AZ1988} Akbar-Zadeh, H. \textit{Sur les espaces de Finsler á courbures sectionnelles constantes.} Bulletins de l'Académie Royale de Belgique \textbf{74} 
(1988), 281--322.

\bibitem{BCS} Bao, D., Chern, S.-S., Shen, Z. \textit{An Introduction to Riemann-Finsler Geometry.} 
Springer-Verlag, 
2000.

\bibitem{BBB1987} Ballmann, W.; Brin, M.; Burns, K. \textit{On surfaces with no conjugate points.} J. Differential Geometry \textbf{25} (1987), 249--273.

\bibitem{CS2005}S. S. Chern and Z. Shen. Riemann-Finsler geometry. World Scientific, 2005.

\bibitem{CBR2020} Chimenton, A. G.; Gomes, J. B.; Ruggiero, R. O. \textit{Gromov-hyperbolicity and transitivity of geodesic flows in n-dimensional Finsler manifolds.} Differential Geometry and its Applications \textbf{68} (2020), 101588.

\bibitem{CI-1999} Contreras, G.; Iturriaga, R. \textit{Convex Hamiltonians without conjugate points.} Ergodic Theory and Dynamical Systems \textbf{19} (1999), 901--952. 


\bibitem{Eberlein1972} Eberlein, Patrick. \textit{Geodesic flow in certain manifolds without conjugate points.} Transactions of the American Mathematical Society \textbf{167} (1972), 151--170.

\bibitem{EO1973} Eberlein, P.; O’Neill, B. \textit{Visibility manifolds.} Pacific Journal of Mathematics \textbf{46} 
(1973), 45--109.

\bibitem{Foulon1992} Foulon, P. \textit{Estimation de l'entropie des systèmes lagrangiens sans points conjugués.} Annales de l'IHP Physique théorique 
\textbf{57} (1992), No. 2, 117--146).

\bibitem{FR2016} Foulon, P.; Ruggiero, R.: \textit{A first integral for $C^\infty$, k-basic Finsler surfaces and applications to rigidity.} Proceedings of the American Mathematical Society \textbf{144} 
(2016), 3847--3858.

\bibitem{GR2013} Gomes, J. B.; Ruggiero, R. O.: \textit{On Finsler surfaces without conjugate points.} Ergodic Theory and Dynamical Systems \textbf{33} 
(2013), 455--474.

\bibitem{Green1958} Green, L. W. \textit{A theorem of E. Hopf.} Michigan Math. J. \textbf{5} (1958), 31--34. 

\bibitem{HS2013} Hryniewicz, U. L.; Salomão, P. A. \textit{Global properties of tight Reeb flows with applications to Finsler geodesic flows on $S^2$.} In: Mathematical Proceedings of the Cambridge Philosophical Society, Vol. \textbf{154}, No. 1, pp. 1--27. Cambridge University Press (2013). 



\bibitem{Klingenberg1974} Klingenberg, Wilhelm. \textit{Riemannian manifolds with geodesic flow of Anosov type.} Annals of Mathematics \textbf{99}, 1 (1974), 1--13.

\bibitem{Krantz2002}Krantz, Steven G., and Harold R. Parks. A primer of real analytic functions. Springer Science and Business Media, 2002.


\bibitem{Mo2008} Mo, X. \textit{A global classification result for Randers metrics of scalar curvature on closed manifolds.} Nonlinear Analysis \textbf{69} (2008), 2996--3004.


\bibitem{P1997} Paternain, G. P. \textit{Finsler structures on surfaces with negative Euler characteristic.} Houston Journal of Mathematics \textbf{23} (1997), 421--426.

\bibitem{Ruggiero2007} Ruggiero, Rafael O. \textit{Dynamics and global geometry of manifolds without conjugate points.} Ensaios Matemáticos \textbf{12} (2007), 1--181.


\bibitem{Shen1997} Shen, Zhongmin. \textit{Volume Comparison and Its Applications in Riemann-Finsler Geometry.} Advances in Mathematics \textbf{128} (1997), 306--328.

\bibitem{Shen2005} Shen, Zhongmin. \textit{Finsler manifolds with nonpositive flag curvature and constant S-curvature.} Mathematische Zeitschrift \textbf{249}, 3 (2005), 625--639.

\bibitem{Shen2016}Shen, Y. B.; Shen, Z. Introduction to modern Finsler geometry. World Scientific Publishing Company, 2016.

\end{thebibliography}
\end{document}